\documentclass[a4paper,11pt]{article}
\usepackage[utf8]{inputenc}
\usepackage[T1]{fontenc}
\usepackage[english]{babel}
\usepackage{amsmath,amssymb,amsthm}
\usepackage[short]{optidef}				
\usepackage[shortlabels]{enumitem} 
\usepackage{comment}
\usepackage{standalone}
\usepackage{mathtools}
\usepackage{fullpage}
\usepackage{float}
\usepackage{mathdots}
\usepackage{tikz}
\usepackage[subrefformat=parens,labelformat=parens]{subcaption}
\graphicspath{{./Figuras/}} 


\newtheorem{theorem}             {Theorem}
\newtheorem{lemma}     	[theorem] {Lemma}        
\newtheorem{conjecture}	[theorem] {Conjecture}

\newtheorem{proposition}[theorem] {Proposition}

\newtheorem{claim}{Claim}

\newcommand{\dw}{\text{dw}}

\newcommand{\authormark}[1]{\textsuperscript{\,#1}}
\newcommand{\showmark}[1]{%
  \hspace*{-1em}\makebox[1em][r]{\authormark{#1}\,}%
  \ignorespaces}

\sloppy

\begin{document}

\title{Domination and packing in graphs}

\author{Renzo Gómez\authormark{1} \and Juan Gutiérrez\authormark{2}\footnote{J. Guti\'{e}rrez is 
supported by Movilizaciones para Investigación AmSud, PLANarity and distance IN Graph theory E070-2021-01-Nro.6997 
and Fondo Semilla UTEC 871075-2022.}} 
\date{}

\maketitle

\begin{center}
\footnotesize

\showmark{1}
Centro de Matemática, Computação e Cognição \\
Universidade Federal do ABC, Santo André, Brazil\\
E-mail: \texttt{gomez.renzo@ufabc.edu.br}

\bigskip
\showmark{2}
Departamento de Ciencia de la Computación\\ 
Universidad de Ingeniería y Tecnología (UTEC), Lima, Perú\\
E-mail: \texttt{jgutierreza@utec.edu.pe}

\end{center}

\begin{abstract}
	Given a graph~$G$, the domination number, denoted by~$\gamma(G)$, is the minimum cardinality of a dominating set in~$G$. 
	Dual to the notion of domination number is the packing number of a graph.
	A packing of~$G$ is a set of vertices whose pairwise distance is at least three. The packing number~$\rho(G)$ of~$G$ is the maximum cardinality of 
	one such set. Furthermore, the inequality~$\rho(G) \leq \gamma(G)$ is well-known. 
	Henning et al.\ conjectured that~$\gamma(G) \leq 2\rho(G)+1$ if~$G$ is subcubic.
	In this paper we progress towards this conjecture by showing that~${\gamma(G) \leq \frac{120}{49}\rho(G)}$ if~$G$ is a bipartite cubic graph. We also show that 
	$\gamma(G) \leq 3\rho(G)$ if~$G$ is a maximal outerplanar graph, and that~$\gamma(G) \leq 2\rho(G)$ if~$G$ is a 
	biconvex graph. Moreover, in the last case, we show that this upper bound is tight.
\end{abstract}

\section{Introduction}	\label{sec:introduction}

For the problem considered here, the input graph is always simple and 
connected (even if this is not stated explicitly). Given a graph $G$,
we say that a set of vertices $X$ of $G$ is a \textit{dominating set}
if every vertex in $V(G)\setminus X$ is adjacent to a vertex in $X$. 
In other words, $X$ intersects the closed neighborhood $N_G[v] = N_G(v) \cup \{ v \}$ 
of each vertex $v \in V(G)$. The \textit{domination number} of~$G$, 
denoted by $\gamma(G)$, is the minimum size of a dominating set of $G$. 

The \textit{distance} between vertices $u$ and $v$ in a graph $G$, 
denoted by $d_G(u, v)$, is the minimum length of a path between them.
Also, we extend this definition to subsets of vertices
of $G$. That is, if $A,B \subseteq V(G)$, 
then~$d_G(A, B)=\min \{d_G(u, v):u \in A, v \in B\}$.
 Dual to the notion of domination number
is the packing number of a graph. A \textit{packing} of a graph $G$ is a set of 
vertices $Y \subseteq V(G)$ such that $N_G[u] \cap N_G[v] = \emptyset$, for every pair 
of distinct vertices $u, v \in Y$. That is, $d_G(u, v) \geq 3$, for every pair of distinct 
vertices $u, v \in Y$. The maximum size of such set is called the \textit{packing number} of $G$, 
and it is denoted by $\rho(G)$. Observe that, while $\gamma(G)$ is the minimum number of closed 
neighborhoods that cover~$G$, $\rho(G)$ is the maximum number of disjoint closed 
neighborhoods in~$G$. Since a dominating set intersects each of these disjoint 
closed neighborhoods, we have that~$\rho(G) \leq \gamma(G)$. 

We observe that, in general, $\rho(G) \neq \gamma(G)$. For instance, 
if we consider $C_4$, the cycle on four vertices, we have 
that $1 = \rho(C_4) < \gamma(C_4) = 2$. Thus, a natural question is to 
investigate how large the ratio $\gamma(G)/\rho(G)$ can be. This is the 
aim of this work, to find upper bounds for~$\gamma(G)$ in terms 
of $\rho(G)$. Burger et al.~{\cite[Lemma~4, Theorem~1]{BurgerHV09}}
observed that, if we consider $G = K_n \square K_n$, the Cartesian product 
of two complete graphs on $n$ vertices, then $\gamma(G) = n$ and $\rho(G) = 1$. 
That is, there exists a graph~$G$ with $\gamma(G) \geq \sqrt{ |V(G)| }\rho(G)$. 
A first upper bound for arbitrary graphs was observed by Henning et 
al.~{\cite[Observation~1]{HenningLR11}}. He showed   
that~$\gamma(G)\leq \Delta(G) \rho(G)$ for any graph $G$. If we restrict 
the class of graphs, we can obtain constant upper bounds. 
L\"{o}wenstein~{\cite[Theorem~8]{LowensteinRR13}} 
showed that $\gamma(G) \leq 2 \rho(G)$ if $G$ is a cactus. Moreover,
this bound is tight (as shown by the cycle $C_4$). 

On the other hand, Meir \& Moon~{\cite[Theorem~7]{MeirM75}} showed 
that $\gamma(G) = \rho(G)$ on trees. Another interesting technique to 
show this equality on some classes of graphs is to use linear programming 
duality~{\cite[Section 7.4]{Scheinerman97}}. To this end, note that $\gamma(G)$ and $\rho(G)$ 
are the optimal values of the following integer programs.
\[
	\setlength{\abovedisplayskip}{0pt}
	\begin{minipage}[t]{0.4\textwidth}
	\begin{mini*}
  	{}
	{\sum_{v \in V(G)}{x_v}}
  	{}
  	{}
  	\addConstraint{x( N[v] ) \geq 1,}{}{\, \forall v \in V(G)}
  	\addConstraint{x_v \in \{0, 1\},}{}{\, \forall v \in V(G).}
	\end{mini*}
	\end{minipage}
\begin{minipage}[t]{0.4\textwidth}
\begin{maxi*}
	{}
	{\sum_{v \in V(G)}{y_v}}
	{}
	{}
  	\addConstraint{y(N[v]) \leq 1,}{}{\, \forall v \in V(G)}	
	\addConstraint{y_v \in \{0, 1\},}{}{\, \forall v \in V(G).}
\end{maxi*}
\end{minipage}
\]
Moreover, consider the fractional analogues of the domination number and packing number 
that are obtained as the linear relaxations of the above integer programs. Note that, if 
the polytope defined by one of these relaxations has integer vertices for a graph $G$, 
then $\gamma(G)=\rho(G)$. A first result using this technique was obtained by 
Farber~{\cite[Corollary~3.4]{Farber84}}. He 
showed that $\gamma(G)=\rho(G)$ for strongly chordal graphs. Brandst\"{a}dt et 
al.~{\cite[Theorem~3.2]{BrandstadtCD98}} extended this result to dually 
chordal graphs. Furthermore, Lov\'{a}sz~\cite{Lovasz75} used the fractional 
packing number to show a ($\log \Delta$)-approximation for the minimum dominating 
set problem, where $\Delta$ denotes the maximum degree of the graph.  

In this work, we are interested in two problems. First, 
we investigate graphs for which $\gamma(G)\leq c \cdot \rho(G)$ 
where $c$ is a constant. The second question of our interest is a conjecture 
raised by Henning et al.~\cite{HenningLR11} for graphs with $\Delta(G) \leq 3$ 
(i.e. subcubic graphs). As mentioned above, they showed that~$\gamma(G)\leq \Delta(G) \rho(G)$ for any graph $G$. 
When $\Delta(G) \leq 2$, they improved this bound 
to $\rho(G) + 1$. For a subcubic graph $G$, they proved 
that $\gamma(G) \leq 2\rho(G)$ if $G$ is also claw-free; and proposed the 
following conjecture.

\begin{conjecture}[Henning et al., 2011]\label{conj:henning-subcubic}
	Let $G$ be a subcubic graph. Then,~ $\gamma(G)\leq 2\rho(G) + 1$, 
	and equality occurs only on three special graphs. 
\end{conjecture}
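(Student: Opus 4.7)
The plan is to take a maximum packing $P\subseteq V(G)$ of size $\rho(G)$ and build a dominating set around it of size at most $2\rho(G)+1$. Set $U := V(G)\setminus N_G[P]$ and $N_G(P):=N_G[P]\setminus P$. By maximality of $P$, every vertex $u\in U$ satisfies $d_G(u,P)=2$ (otherwise $P\cup\{u\}$ would still be a packing), so each such $u$ has at least one neighbor in $N_G(P)$. Since $G$ is subcubic we have $|N_G(P)|\le 3\rho(G)$, and each vertex of $N_G(P)$ already spends one edge reaching $P$ and therefore has at most two neighbors outside $P$. The conjecture then reduces to the following local claim: one can dominate $U$ using at most $\rho(G)+1$ vertices drawn from $N_G(P)\cup U$.

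For this local problem I would form the bipartite auxiliary graph $H$ on $N_G(P)\cup U$ inheriting the edges of $G$ across the two sides, in which every vertex on the $U$-side has degree at most $3$ and every vertex on the $N_G(P)$-side has degree at most $2$. A purely degree-based vertex-cover bound on $H$ is too weak, so I would localize: for each $v\in P$ look at the cluster of vertices of $U$ reachable through $N_G(v)$ and assign to $v$ a short list of ``helpers'' (ideally one, occasionally two) that dominate this cluster. The averaging would then be carried out by a discharging scheme on $P$, where vertices that force two helpers are compensated by nearby vertices that need none. To make the discharging provable, I would argue on a minimum counterexample $G$ and use induction on $|V(G)|$: standard reducibility arguments should rule out, in any such $G$, short cycles attached to $P$, degree-two vertices in $N_G(P)$ with only one neighbor in $U$, pairs of $P$-vertices linked by a path of length three through two vertices of $N_G(P)$ of degree two, and several similar local configurations, each of which lets us delete a small subgraph, apply induction, and lift the resulting dominating set while losing at most one unit in the budget. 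The slack ``$+1$'' is then spent at the very end to close out whatever residual vertex of $U$ the recursion fails to dominate.

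The main obstacle, and the reason the conjecture remains open, is the tightness clause: classifying the three exceptional graphs and showing that every other subcubic graph satisfies the strict inequality $\gamma(G)\le 2\rho(G)$. Generic discharging yields strict inequalities for almost every configuration, but sharpening the bound to $2\rho(G)+1$ and pinning down precisely the extremal graphs requires tracking which reductions preserve both $\gamma$ and $\rho$ simultaneously, something that local exchange arguments routinely miss. I would expect the bulk of the work to lie in isolating a small catalogue of forbidden sub-configurations whose absence forces the constructed dominating set to have size at most $2\rho(G)$, and in showing that the subcubic graphs containing all of them simultaneously are exactly the three exceptional cases; auxiliary tools likely to help are fractional domination (where for subcubic graphs one has $\gamma_f(G)/\rho_f(G)$ controlled) and Brooks-type colouring arguments that constrain the structure around any ``bad'' cluster where the budget is saturated.
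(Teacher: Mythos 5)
This statement is a \emph{conjecture} (due to Henning, L\"owenstein and Rautenbach, 2011); the paper does not prove it, and indeed explicitly presents its own results (the bound $\gamma(G)\le\frac{120}{49}\rho(G)$ for bicubic graphs, obtained by combining the Kostochka--Stocker upper bound $\gamma(G)\le\frac{5}{14}n$ with a new lower bound $\rho(G)\ge\frac{7}{48}n$) only as partial progress toward it. So there is no proof in the paper to compare against, and your text should be judged on whether it actually establishes the conjecture. It does not.

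The concrete gap is that your argument reduces the conjecture to the ``local claim'' that $U=V(G)\setminus N_G[P]$ can be dominated by at most $\rho(G)+1$ vertices of $N_G(P)\cup U$, and this claim is never proved; it is essentially equivalent in difficulty to the conjecture itself. Everything after that point --- the discharging scheme on $P$, the list of reducible configurations in a minimum counterexample, the classification of the three extremal graphs --- is described only as what you ``would'' do, with no specification of charges, no verification that the listed configurations are actually reducible (i.e.\ that deleting them decreases $\gamma$ and does not decrease $\rho$ by too much), and no argument that the catalogue of configurations is complete. The correct preliminary observations you do make (every $u\in U$ is at distance exactly $2$ from $P$ by maximality; $|N_G(P)|\le 3\rho(G)$; each vertex of $N_G(P)$ has at most two neighbours outside $P$) only yield the trivial bound $\gamma(G)\le 4\rho(G)$ or, with more care, the known $\gamma(G)\le\Delta\rho(G)=3\rho(G)$ of Henning et al.; getting from $3\rho(G)$ down to $2\rho(G)+1$ is precisely the open problem. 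As written, this is a research plan, not a proof, and you acknowledge as much in your final paragraph.
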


In this paper we progress towards Conjecture~\ref{conj:henning-subcubic}
by proving the following result.
\begin{itemize}
	\item If $G$ is a bipartite cubic (bicubic) graph , then $\gamma(G)\leq \frac{120}{49}\rho(G)$.
\end{itemize}
As mentioned above, if $G$ is a cactus, then $\gamma(G)\leq 2\rho(G)$. 
A superclass of cacti are outerplanar graphs. Regarding this class, 
we obtain the following result.
\begin{itemize}
	\item If $G$ is a maximal outerplanar graph, then $\gamma(G)\leq 3\rho(G)$.
\end{itemize}
Finally, we consider the class of biconvex graphs and show the following upper bound.
\begin{itemize}
	\item If $G$ is a biconvex graph, then $\gamma(G)\leq 2\rho(G)$. Moreover, 
	this upper bound is tight. 
\end{itemize}

\section{Bicubic graphs} \label{sec:bicubic}

In this section, we show that $\gamma(G) \leq \frac{120}{49}\rho(G)$ if $G$ is a 
bipartite cubic graph, also called a \textit{bicubic} graph. For this, 
we will bound separately the parameters $\gamma(G)$ and $\rho(G)$. 
The following result of Kostochka \& Stocker~\cite{KostochkaS09} gives us 
an upper bound for the domination number of cubic graphs. 
\begin{lemma}[{\cite{KostochkaS09}}]
\label{lemma:domcubickoth}
	For every connected cubic graph $G$ on $n \geq 9$ vertices,
$\gamma(G) \leq \frac{5}{14}n$.
\end{lemma}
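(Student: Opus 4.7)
The plan is to prove the bound by induction on $n$, following the reduction-and-discharging line initiated by Reed, who established the weaker bound $\gamma(G) \le 3n/8$. Moving from $3/8$ down to $5/14$ means losing less than $1/7$ per vertex on average, so every local configuration has to be squeezed tightly; this already signals that the argument will be local/structural rather than global.

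First I would assemble a catalogue of \emph{reducible configurations}: small induced subgraphs $H \subseteq G$ such that, given any dominating set $D'$ of size at most $\tfrac{5}{14}|V(G')|$ in the reduced graph $G'$ obtained by deleting (and rewiring) $H$, one can extend $D'$ by an explicit number of vertices inside $H$ to obtain a dominating set of $G$ of size at most $\tfrac{5}{14}n$. Natural candidates are short cycles (triangles and $4$-cycles), two adjacent degree-$3$ vertices whose joint closed neighborhood has few vertices, diamond-like gadgets, and vertices lying on two short cycles simultaneously; each case requires checking that the rewiring preserves cubicity and simplicity, and that the local ``cost'' of dominating $H$ does not exceed $\tfrac{5}{14}|H|$.

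Second, taking a minimum counterexample $G$, I would use the irreducibility (no configuration from the catalogue is present) to extract global structure: $G$ has girth at least some threshold, every two short cycles are far apart, and degree-$2$ vertices in the reduced object are absent. On this restricted structure I would run a discharging argument. Assign each vertex an initial charge $1$, redistribute charge along edges and around the (few, well-separated) short cycles following rules tuned so that after discharging every vertex has charge at least $14/5$ times its ``fair share''. Summing charges then yields $n \ge \tfrac{14}{5}\gamma(G)$, i.e.\ the desired bound. The three extremal graphs excluded by the $n \ge 9$ hypothesis should appear as the unique configurations in which every discharging rule is tight.

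The hard part is, as usual in such arguments, taming the case analysis. One must verify reducibility for a long list of local patterns (the bulk of the Kostochka--Stocker paper) and, simultaneously, design discharging rules that interact cleanly with every remaining irreducible pattern; the interaction between two short cycles sharing a vertex or an edge, and the careful accounting near the three exceptional small graphs, are the points where the argument is most delicate. If I wanted to avoid this combinatorial grind, I would instead consult \cite{KostochkaS09} directly and use Lemma~\ref{lemma:domcubickoth} as a black box, which is precisely the route taken in the rest of this section.
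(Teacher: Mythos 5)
This lemma is not proved in the paper at all: it is an external result of Kostochka and Stocker, quoted with a citation and used as a black box in the derivation of the $\tfrac{120}{49}$ bound. Your last sentence therefore matches what the paper actually does, and if the intended ``proof'' is simply the citation, that is the correct and complete answer here.

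The body of your proposal, however, does not constitute a proof: it is a meta-level description of a proof strategy with every substantive step deferred. You never exhibit the catalogue of reducible configurations, never verify that any single one of them is in fact reducible at cost at most $\tfrac{5}{14}$ per vertex, never state the discharging rules, and never carry out the case analysis that you yourself identify as ``the bulk of the Kostochka--Stocker paper.'' An argument of the form ``assemble reducible configurations, then discharge on a minimum counterexample, tuning the rules so that everything works out to $14/5$'' could be written verbatim for any conjectured constant, so it cannot certify the specific value $\tfrac{5}{14}$; the entire mathematical content lies in the omitted verifications. Moreover, the template you describe (rewiring deletions that preserve cubicity, girth lower bounds, charge redistribution around short cycles) is your reconstruction of how such results ``usually'' go rather than an account of the actual Kostochka--Stocker argument, so even as a roadmap it is not guaranteed to lead anywhere. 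The honest options are the one you name at the end --- cite \cite{KostochkaS09} and move on, as the paper does --- or reproduce their proof in full; the middle ground you offer is a gap, not a proof.
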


Now, we show a lower bound for $\rho(G)$ on bicubic graphs. 
\begin{lemma}\label{lemma:packingcubicn7}
	For every bicubic graph on $n \geq 16$ vertices, $\rho(G) \geq \frac{7}{48}n$.
\end{lemma}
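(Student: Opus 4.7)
The plan is to take a maximum packing $P$ of $G$ and study the structure of the complement $V(G) \setminus N[P]$. Because $G$ is cubic and $P$ is a packing, the closed neighborhoods $\{N[v] : v \in P\}$ are pairwise disjoint, each of size $4$, so $|N[P]| = 4|P|$. Writing $Q = N(P) \setminus P$ and $R = V(G) \setminus N[P]$, we have $|Q| = 3|P|$ and $|R| = n - 4|P|$, and the target inequality $\rho(G) \ge 7n/48$ is equivalent to $|R| \le (20/7)|P|$.

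Two elementary structural facts come first. Each $w \in Q$ has exactly one neighbor in $P$ (otherwise two vertices of $P$ would lie at distance $2$, contradicting that $P$ is a packing), hence $w$ has exactly two other neighbors in $Q \cup R$. By the maximality of $P$, each $u \in R$ has at least one neighbor in $Q$. Counting the edges between $Q$ and $R$ from both sides already gives $|R| \le 2|Q| = 6|P|$, i.e.\ the trivial bound $|P| \ge n/10$. To push this to $|P| \ge 7n/48$, the bipartite structure must enter. Let $(X,Y)$ be the bipartition and split $P, Q, R$ accordingly into $P_X, P_Y, Q_X, Q_Y, R_X, R_Y$. Since neighborhoods of distinct packing vertices are disjoint, $Q_Y = N(P_X)$ with $|Q_Y| = 3|P_X|$ and symmetrically $|Q_X| = 3|P_Y|$. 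A vertex $u \in R_X$ has all three neighbors in $Y \setminus P_Y = Q_Y \cup R_Y$, and because its distance to $P_Y$ is odd and strictly greater than $1$, the only way for $u$ to sit at distance $2$ from $P$ is through $Q_Y$; so each $u \in R_X$ has at least one neighbor in $Q_Y$, and symmetrically for $R_Y$.

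The final step is a careful double count, possibly with discharging, using the partition of $R_X$ (respectively $R_Y$) according to how many of its vertices have one, two, or three neighbors in $Q_Y$ (respectively $Q_X$), together with the ``capacity'' of two outgoing edges per $Q$-vertex. Combining the inequalities obtained on each side of the bipartition with the identities $|R_X| = n/2 - |P_X| - 3|P_Y|$ and $|R_Y| = n/2 - |P_Y| - 3|P_X|$, I expect a system that, once aggregated, yields $n \le (48/7)|P|$. The main obstacle is precisely closing the gap between the trivial $n/10$ bound and the sharper $7n/48$: the elementary counts are symmetric and always give $n \le 10|P|$, so the improvement must come from exploiting either (i) that a $Q$-vertex cannot serve two $R$-vertices and also leak an edge into $Q$, or (ii) forbidden short-cycle configurations forced by bipartiteness, in order to show that on average the edge-capacities between $R$ and $Q$ are strictly less than the crude bound used above. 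The hypothesis $n \ge 16$ should be exactly what excludes the small extremal graphs (such as $K_{3,3}$ and the $3$-cube) where this refined counting would otherwise be tight or fail.
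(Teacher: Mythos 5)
Your proposal sets up correct bookkeeping but stops exactly where the proof has to begin. Everything you establish rigorously --- $|N[P]|=4|P|$, $|Q|=3|P|$, each $Q$-vertex having one neighbour in $P$, each $R$-vertex having a neighbour in $Q$ (indeed in the $Q$-part of the opposite side of the bipartition) --- yields only $|R|\le 2|Q|=6|P|$, i.e.\ $\rho(G)\ge n/10$. The lemma requires $|R|\le \frac{20}{7}|P|$, which is less than half of the trivial bound, and the step that would achieve this is described only as ``a careful double count, possibly with discharging'' whose conclusion you say you ``expect.'' That is the entire content of the lemma, and it is missing. It is also doubtful that it can be supplied by local edge-counting around a single maximum packing: nothing in your local constraints forbids the extremal configuration in which every $Q$-vertex sends both of its spare edges to $R$ and every $R$-vertex has exactly one neighbour in $Q$, which would realize $|R|=6|P|$; ruling such configurations out requires a global ingredient that your outline does not identify. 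Your reading of the hypothesis $n\ge 16$ is likewise a guess; in the actual argument it is used to guarantee that a certain auxiliary graph is not complete so that Brooks' theorem applies, not to exclude $K_{3,3}$ or the cube from a discharging analysis.

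The paper's proof proceeds quite differently: rather than analyzing a maximum packing of $G$, it builds a large packing in two stages. It takes a maximum packing $P$ inside one part $X$ (so that $X=P\cup R$ and $Y=Q\cup S$ with $|Q|=3|P|$), then takes a second maximum packing $T$ inside the far set $S\subseteq Y$, shows $|S|\le 4|T|$ by matching each vertex of $S\setminus T$ injectively into $N(T)$, and observes that $P\cup T$ is a packing, giving $|Y|\le 4\rho(G)-|P|$ and symmetrically $|X|\le 4\rho(G)-|P'|$. The decisive global ingredient --- the one your outline lacks --- is a lower bound $|P|\ge |X|/6$ on a one-sided maximum packing, obtained by applying Brooks' theorem to the subgraph of $G^2$ induced by $X$, which has maximum degree $6$; this is a colouring/independent-set argument, not an edge count. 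Summing the two inequalities then gives $n\le 8\rho(G)-n/6$, i.e.\ $\rho(G)\ge \frac{7}{48}n$. To salvage your approach you would need to either find and verify the discharging rules you allude to, or import something playing the role of the Brooks-type bound; as written, the proposal proves only $\rho(G)\ge n/10$.
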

\begin{proof}
	Let $G$ be a bicubic graph with parts $X$ and $Y$. As $G$ is a 
	bipartite regular graph, we have that $|X| = |Y| = \frac{n}{2}$. Let $P$ be a maximum 
	packing that consists of only vertices in~$X$. Let $Q = N(P)$, $R = N(Q)\setminus P$, 
	and $S = N(R) \setminus Q$. Let $p = |P|$.

	Note that $X = P \cup R$. Indeed, if there exists a vertex $v \in X \setminus (P \cup R)$, 
	then $d_G( \{v\}, P ) \geq 4$ (since $R$ contains the vertices at distance two from $P$). 
	Thus $P \cup \{ v \} \subseteq X$ is a packing, which contradicts the maximality of $P$. 
	So, $|R| = |X| - p = \frac{n}{2} - p$. Since $X = P \cup R$, we have that $Y = Q \cup S$.
	As the neighborhood of every vertex in $P$ is pairwise disjoint, we have that $|Q| = 3p$ and $|S| = |Y| - 3p = \frac{n}{2} - 3p$.

	Let $T$ be a maximum packing that consists of vertices in $S$. Let $t =|T|$ and let $W = N(T)$. 
	Note that $W \subseteq R$, and consider the induced bipartite graph $H$ with parts $W$ and $S \setminus T$. 
	We depict this situation in Figure~\ref{fig:bicubicf}. As $G$ is cubic and every vertex $u \in W$ 
	has a neighbor in $Q$ and a neighbor in $T$, every such $u$ has degree at most one in $H$. Observe 
	that every vertex in $S \setminus T$ has at least one neighbor in~$W$. Otherwise, there exists $v \in S$ 
	such that $N_G[v] \cap W = \emptyset$, and the set $T \cup \{v\}$ is a packing contained in $S$, a 
	contradiction to the maximality of $T$. Hence, every vertex in $S\setminus T$ has degree at least 
	one in $H$ and, therefore, $|S \setminus T| \leq |W|$.

	\begin{figure}[H]
		\centering
		\includegraphics[scale=.5]{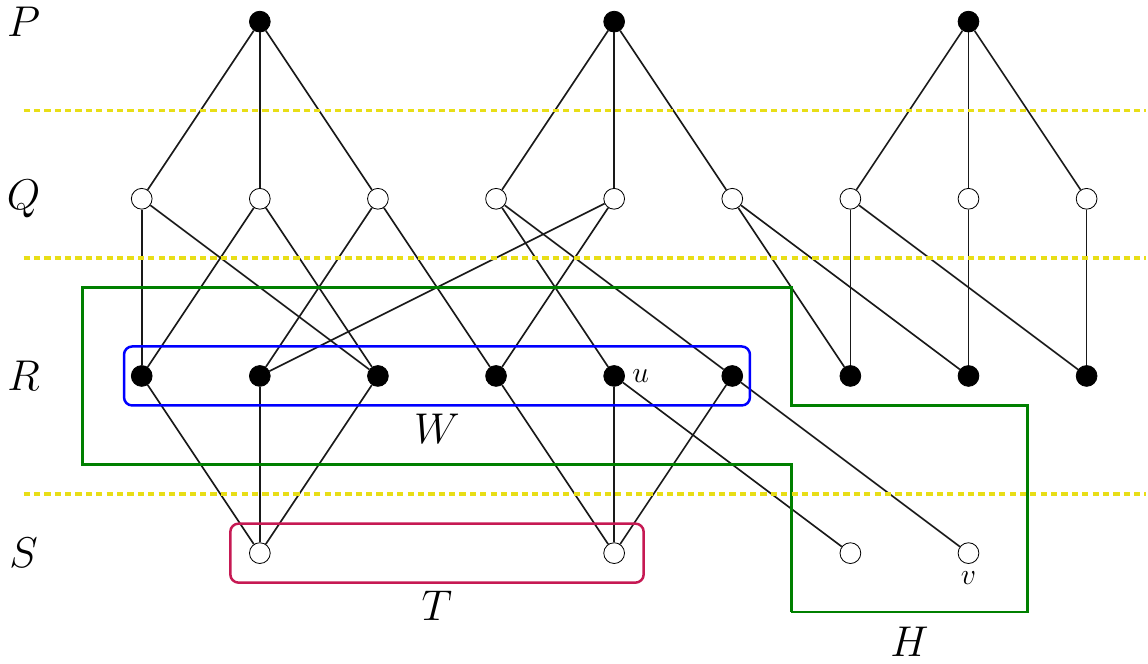}
		\caption{The sets $W$, $T$ and the graph $H$ in the proof of Lemma~\ref{lemma:packingcubicn7}.}
		\label{fig:bicubicf}		
	\end{figure}

	As the neighborhood of every vertex in $T$ is pairwise disjoint, we have that $|W| = 3t$. 
	Thus $|S| \leq 4t$. On the other hand, recall that $|S| = |Y| - 3p$ which implies that $|Y| \leq 3p + 4t$ 
	or, equivalently, that $|Y| \leq 4(p + t) - p$. Observe that $P \cup T$ is a packing of $G$, since the vertices 
	in $P \subseteq X$ are not adjacent to the vertices in $T \subseteq Y$. Therefore,
	\begin{equation}\label{eq:packX}
		|Y| \leq 4\rho(G) - p.
	\end{equation}
	Now, if instead of starting with a maximum packing with vertices only in $X$, we begin with a maximum packing with vertices only in~$Y$, 
	by symmetric arguments, we obtain that 
	\begin{equation}\label{eq:packY}
		|X| \leq 4\rho(G) - p', 
	\end{equation}
	where $p'$ is the cardinality of a maximum packing
	with vertices only in $Y$. In what follows, we show 
	a lower bound for $p$ (resp. $p'$) in terms of the size of $X$ (resp. $Y$). 

	\begin{claim} \label{claim:upperbd}
		Let $G$ be a connected bicubic graph of order $n \geq 16$ with parts $X$ and $Y$.  
		If $P^*$ is a maximum packing with vertices only in $X$ (resp. $Y$), then $|P^* | \geq \frac{|X|}{6}$ 
		(resp. $|P^* | \geq \frac{|Y|}{6}$).
	\end{claim}
	\begin{proof}[Proof of Claim]
		Without loss of generality, suppose that $P^* \subseteq X$. Let $G'$ be the auxiliary graph obtained 
		from $G$ as follows.
		\[
			\begin{array}{rcl}
				V(G') & = & X, \\
				E(G') & = & \{ xx' : x, x' \in X, x \neq x', N_G(x) \cap N_G(x') \neq \emptyset \}.
			\end{array}
		\]
		In other words, $G'$ is the subgraph of $G^2$ (the square of $G$) induced by the vertices of $X$. 
		Observe that, under this setting, a maximum packing with vertices only in $X$ is a maximum independent set of~$G'$. 
		Moreover, since $G$ is cubic, we have that $\Delta(G') \leq 6$. To show the claim, we will use the 
		following result known as \textit{Brook's Theorem}:

		\begin{theorem}[{\cite[Theorem 14.4]{BondyM08}}] \label{thm:brooks}
			If $G$ is a connected graph, and is neither an odd cycle nor a complete graph, 
			then $\chi(G) \leq \Delta(G)$.
		\end{theorem}

		Note that, as $G$ is connected, any path in $G$ between two vertices in $X$ induces 
		a path in $G'$ between those vertices. Therefore, $G'$ is also connected. Furthermore, 
		observe that $G'$ is not a complete graph. Otherwise, since $\Delta(G') \leq 6$, we have that   
		the order $G'$ is at most $7$. But, this implies that $G$ has at most $14$ vertices, a 
		contradiction. 
		
		If $G'$ is an odd cycle, any maximum independent set has size at 
		least $\left \lfloor \frac{|X|}{2} \right \rfloor \geq \frac{|X|}{6}$ and we are done.
		Finally, if $G'$ is not an odd cycle, then,
		by Theorem~\ref{thm:brooks}, we have that $G'$ admits a $\Delta(G')$-coloring. 
		Thus, $G$ has an independent set of size at least $V(G') / \Delta(G') \geq |X| / 6$.
		This finishes the proof of Claim \ref{claim:upperbd}.
\end{proof}

	By Claim~\ref{claim:upperbd}, after adding~(\ref{eq:packX}) and~(\ref{eq:packY}), we obtain the following.
	\[
		\begin{array}{rcl}
			n & \leq & 8\rho(G) - (p + p') \\[2mm]
			\ & \leq & 8\rho(G) - \frac{n}{6}, 
		\end{array}
	\]
	or, equivalently, $\rho(G) \geq \frac{7}{48}n$. 
	This finishes the proof of Lemma \ref{lemma:packingcubicn7}.
\end{proof}

Using an integer programming solver and the database in~\cite{CoolsaetDG23}, we have confirmed 
that the inequality~$\gamma(G) \leq 2\rho(G)$ holds for any bicubic graph of order at most $14$. Therefore,  
Lemma~\ref{lemma:domcubickoth} and Lemma~\ref{lemma:packingcubicn7} imply the following result. 

\begin{theorem}
	If $G$ is a bicubic graph, then $\gamma(G) \leq \frac{120}{49} \rho(G) < 2.45 \rho(G)$.
\end{theorem}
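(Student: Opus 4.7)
The plan is to combine the two preceding lemmas in the regime where both apply, and to invoke the computational verification cited in the excerpt for the remaining small cases. Since $G$ is bicubic and therefore $3$-regular bipartite, its parts $X$ and $Y$ satisfy $|X| = |Y|$, so the order $n = |V(G)|$ is even. Consequently, every bicubic graph has order either at most $14$ or at least $16$, and I will split the argument along this dichotomy.

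For $n \geq 16$, I would apply Lemma~\ref{lemma:domcubickoth} (whose hypotheses of connectedness and $n \geq 9$ are met since the paper assumes all input graphs to be simple and connected) to obtain $\gamma(G) \leq \tfrac{5}{14} n$, and Lemma~\ref{lemma:packingcubicn7} to obtain $n \leq \tfrac{48}{7}\rho(G)$. Composing these yields
\[
\gamma(G) \;\leq\; \tfrac{5}{14}\, n \;\leq\; \tfrac{5}{14} \cdot \tfrac{48}{7}\,\rho(G) \;=\; \tfrac{240}{98}\,\rho(G) \;=\; \tfrac{120}{49}\,\rho(G),
\]
which is exactly the desired inequality, and the strict estimate $\tfrac{120}{49} < 2.45$ is a trivial numerical check.

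For $n \leq 14$, I would simply invoke the integer programming certification on the database of bicubic graphs of~\cite{CoolsaetDG23} mentioned just before the theorem, which confirms $\gamma(G) \leq 2\rho(G)$ for every bicubic graph in this range. Since $2 < \tfrac{120}{49}$, the theorem follows in the small-order case as well.

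There is essentially no obstacle at this stage of the argument: all the technical difficulty has already been absorbed into Lemma~\ref{lemma:domcubickoth}, Lemma~\ref{lemma:packingcubicn7}, and the small-order enumeration. The only matters to double-check are the parity argument that rules out the intermediate value $n=15$, and the fact that the hypotheses of Lemma~\ref{lemma:domcubickoth} (connectedness and $n\geq 9$) remain in force throughout the large-order case.
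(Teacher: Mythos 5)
Your proof is correct and follows exactly the paper's intended argument: combine Lemma~\ref{lemma:domcubickoth} and Lemma~\ref{lemma:packingcubicn7} for $n \geq 16$ to get $\gamma(G) \leq \frac{5}{14}\cdot\frac{48}{7}\rho(G) = \frac{120}{49}\rho(G)$, and invoke the computational check for $n \leq 14$, with the parity of $n$ (forced by $|X|=|Y|$ in a regular bipartite graph) excluding $n=15$. No issues.
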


\section{Maximal outerplanar graphs} \label{sec:outerplanar}

A graph is \textit{outerplanar} if it has an embedding in the plane such that 
all of its vertices belong to the boundary of its unbounded face. An outerplanar graph 
is maximal if the addition of a new edge breaks its outerplanarity. In this section, 
we will show that~$\gamma(G) \leq 3\rho(G)$ if $G$ is a maximal
outerplanar graph. In fact, we will show an stronger result.

\begin{theorem}\label{theorem:maxouter}
	For every maximal outerplanar 
	graph~$G$,~${\gamma(G) \leq \min\{3\rho(G), \frac{9}{4}\rho(G)+t/4}\}$, 
	where~$t$ is the number of vertices of degree at most 3 in~$G$.
\end{theorem}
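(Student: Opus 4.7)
The plan is to fix a maximum packing $P$ of $G$ and carefully convert it into a dominating set whose size meets both announced inequalities. Since $P$ is maximum (hence maximal), every vertex in $V(G)\setminus N[P]$ lies at distance exactly $2$ from some $v\in P$, so the task reduces to quantifying how many extra vertices are needed to dominate these ``distance-$2$ shells'' $N^2[v]\setminus N[v]$.

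For the bound $\gamma(G)\le 3\rho(G)$, my plan is to charge at most two extra dominators to each $v\in P$, exploiting the fan-like local structure of a maximal outerplanar graph around $v$. In cyclic Hamiltonian order the neighbors $w_1,\ldots,w_d$ of $v$ actually form a path: each consecutive pair $w_iw_{i+1}$ is the third edge of the triangle $vw_iw_{i+1}$ (the unique triangle at the corner $v$ of the sub-polygon bounded by $vw_i$ and $vw_{i+1}$). The chord-edges $vw_i$ partition the rest of $G$ into sub-polygons $S_1,\ldots,S_{d-1}$. For each $S_i$ whose distance-$2$ contribution to $N^2[v]\setminus N[P]$ is nonempty, either $S_i$ contains another packing vertex that absorbs the charge locally, or every vertex of $S_i$ is adjacent to $w_i$ or $w_{i+1}$, in which case $\{w_i,w_{i+1}\}$ dominates $S_i$. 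A careful charging argument then bounds the number of ``bad'' sub-polygons per $v\in P$.

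For the refined bound $\gamma(G)\le \tfrac{9\rho(G)}{4}+\tfrac{t}{4}$, I would run an ear-decomposition induction on $|V(G)|$, using the classical fact that every maximal outerplanar graph with at least four vertices contains two non-adjacent degree-$2$ vertices. In the inductive step a low-degree ear is peeled off: the removed low-degree vertex contributes to the $t/4$ term, while the inductive hypothesis handles the residual (still maximal outerplanar) graph. The key savings come from the observation that a packing vertex of degree at most $3$ has only $O(1)$ sub-polygons, which tightens the ``at most $2$ extra dominators'' count to an amortized $\tfrac{9}{4}$ plus a contribution proportional to $t$.

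The main obstacle is the charging step for $\gamma(G)\le 3\rho(G)$: it is \emph{not} true in general that $N^2[v]\setminus N[v]$ can be dominated by two vertices, since a vertex of high degree can create a distance-$2$ shell requiring arbitrarily many dominators. The saving must come from the fact that $P$ is maximum: whenever a sub-polygon at $v$ is large enough to host many distance-$2$ vertices, either it is constrained enough that $\{w_i,w_{i+1}\}$ still dominates it, or it must contain another packing vertex that absorbs the excess. Turning this into a rigorous global argument, while simultaneously tracking the refined $t/4$ contribution, is the technical heart of the proof.
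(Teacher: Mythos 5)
There is a genuine gap: your text is a plan whose central steps are left open, and you say so yourself (``Turning this into a rigorous global argument \ldots is the technical heart of the proof''). Concretely, for the bound $\gamma(G)\le 3\rho(G)$ the dichotomy you propose for a sub-polygon $S_i$ --- either $S_i$ contains another packing vertex that ``absorbs the charge'', or $\{w_i,w_{i+1}\}$ dominates $S_i$ --- is not established and is not obviously true. Maximality of $P$ only guarantees that every vertex of $S_i$ is within distance $2$ of \emph{some} packing vertex, which may lie outside $S_i$ (or be $v$ itself, reached through $w_i$); nothing forces $S_i$ to contain a packing vertex, and a deep sub-polygon need not be dominated by two vertices. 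So the amortized count of ``at most two extra dominators per packing vertex'' is exactly the content of the theorem and remains unproved. The ear-decomposition induction for $\gamma(G)\le \tfrac{9}{4}\rho(G)+t/4$ is likewise not closed: deleting a degree-$2$ vertex lowers the degrees of its two neighbours, so $t$ can change non-monotonically, and you do not control how $\gamma$, $\rho$ and $t$ move together in the inductive step.

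For comparison, the paper avoids all local charging. It passes to the clique graph $G'$ of $G$ (whose nodes are the triangles), uses that $G'$ is dually chordal so $\gamma(G')=\rho(G')$, and proves two transfer lemmas: a dominating set of $G'$ lifts to one of $G$ of at most triple the size (and, using Tokunaga's $4$-colouring of maximal outerplanar graphs in which every $4$-cycle sees all four colours, of size at most $\tfrac{9}{4}\gamma(G')+t/4$), while a packing of $G'$ pushes down to a packing of $G$ of the same size via an induction on the dual tree. If you want to salvage your approach you would need to prove the sub-polygon dichotomy rigorously; as written, neither inequality of the theorem is established.
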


In what follows, given a maximal outerplanar graph $G$, we will 
suppose that $G$ is embedded in the plane (even if this is not stated 
explicitly). Let $G$ be a maximal outerplanar graph. We denote by~$G^{*}$ 
the \textit{dual} of~$G$; that is, the vertices of~$G^{*}$ represent the triangles 
of~$G$, and two vertices in~$G^{*}$ are adjacent if the corresponding triangles 
share an edge. Also, we denote by~$G'$ the~\textit{clique graph} of~$G$; 
that is, the vertices of~$G'$ represent the maximal cliques of~$G$, and two vertices 
in~$G'$ are adjacent if they share a vertex. Since $G$ is maximal outerplanar, 
the maximal cliques of~$G$ are also its triangles and, thus, $G^{*}$ is a 
subgraph of $G'$. To avoid confusion, hereafter, we refer to the elements 
of $V(G^{*})$ (resp. $V(G')$) as the \textit{nodes} of $G^{*}$ (resp. $G'$).
It is known that~$G'$ is a dually chordal graph~{\cite[Corollary~11]{Brandstadt94}}. 
Hence, by Brandstadt et al.~\cite{BrandstadtCD98}, the following property holds.

\begin{proposition}
[{\cite[Theorem 3.2]{BrandstadtCD98}}] \label{prop:gammarho-duallychordal} 
	$\gamma(G')=\rho(G')$.
\end{proposition}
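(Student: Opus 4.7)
The plan is to deduce this proposition as an immediate corollary of two already-referenced results, so the argument reduces to a short citation chain rather than a constructive combinatorial argument. First, I would invoke Corollary~11 of \cite{Brandstadt94} (mentioned in the sentence preceding the statement) to conclude that the clique graph $G'$ of a maximal outerplanar graph $G$ is dually chordal. The underlying reason, which I would briefly remark, is that since $G$ is maximal outerplanar its maximal cliques are precisely its triangles, and these triangles fit together along a tree (the weak dual $G^{*}$ is a tree). This tree structure is exactly what allows $G'$ to admit a maximum neighborhood ordering, which is one of the equivalent characterizations of dual chordality.

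Second, once dual chordality is established, I would directly apply Theorem~3.2 of \cite{BrandstadtCD98}, which asserts that $\gamma(H) = \rho(H)$ for every dually chordal graph $H$. Specializing to $H := G'$ yields $\gamma(G') = \rho(G')$, which is the statement to prove. Conceptually, Theorem~3.2 is proved via linear programming duality: on a dually chordal graph, the clique matrix is balanced in the appropriate sense, so the fractional relaxations of the domination and packing integer programs (displayed earlier in the excerpt) have coinciding integer optima.

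Since both ingredients are external theorems, there is no genuine obstacle to overcome within this paper; the only check that requires even a second of thought is the identification of the maximal cliques of $G$ with its triangles, so that the hypotheses of Corollary~11 of \cite{Brandstadt94} are visibly satisfied. No explicit construction of a dominating set or packing in $G'$ is needed at this stage — that combinatorial work is postponed to the subsequent arguments, where this clean equality $\gamma(G') = \rho(G')$ will presumably be lifted back to a bound relating $\gamma(G)$ and $\rho(G)$ on the original maximal outerplanar graph~$G$.
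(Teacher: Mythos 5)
Your argument is exactly the paper's: the statement is obtained by citing Corollary~11 of \cite{Brandstadt94} to get that the clique graph $G'$ is dually chordal, and then applying Theorem~3.2 of \cite{BrandstadtCD98} to conclude $\gamma(G')=\rho(G')$. The additional remarks you include (the tree structure of $G^{*}$ and the LP-duality behind the cited theorem) are correct context but not needed; the proposal matches the paper.
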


Thus, by Proposition~\ref{prop:gammarho-duallychordal},
it suffices to show that~$\gamma(G) \leq \min \{3 \gamma(G'), \frac{9}{4}\gamma(G')+t/4\}$ 
(see Lemma~\ref{lemma:gammaleq-outerplanar}) and that~$\rho(G) \geq  \rho(G')$ 
(see Lemma~\ref{lemma:rhogeq-outerplanar}) to prove Theorem~\ref{theorem:maxouter}. 
For the first lemma, we will use the following result obtained by Tokunaga~\cite{Tokunaga2013}.

\begin{proposition}
[{\cite[Lemma 1]{Tokunaga2013}}] 
\label{prop:tokunaga4col}
A maximal outerplanar graph can be 4-colored such
that every cycle of length 4 has all four colours.
\end{proposition}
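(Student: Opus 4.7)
The plan is to prove the statement by induction on $n = |V(G)|$, exploiting the classical fact that every maximal outerplanar graph on $n \geq 4$ vertices has at least two \emph{ears}: vertices of degree $2$ whose two neighbours are adjacent.

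Before setting up the induction, I would make one preliminary observation. Since maximal outerplanar graphs are chordal, every $4$-cycle in $G$ has a chord, so it consists of two triangles $uvw$ and $uwx$ sharing the edge $uw$. Hence the condition we seek is equivalent to: whenever two triangles share an edge $uw$ with opposite vertices $v$ and $x$, we have $c(v) \neq c(x)$, on top of proper coloring.

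For the base case $n = 3$, the graph has no $4$-cycle and any proper $3$-coloring works. For the inductive step with $n \geq 4$, I would pick an ear $v$ with neighbours $u, w$. The graph $G - v$ has $2(n-1)-3$ edges and is outerplanar, hence again maximal outerplanar, so by induction it admits a $4$-coloring $c$ with the desired property. To extend $c$ to $v$, observe that in $G - v$ the edge $uw$ becomes a boundary edge, so it lies in exactly one triangle $uwx$ of $G - v$. The three colours $c(u), c(w), c(x)$ are pairwise distinct (as $uwx$ is a triangle), leaving exactly one colour $\alpha \in \{1,2,3,4\}$ missing from this set; set $c(v) = \alpha$.

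Verification is then short: $c(v) \neq c(u), c(w)$, so the coloring remains proper; every $4$-cycle entirely inside $G - v$ is handled by induction; and any new $4$-cycle must use the unique triangle $uvw$ containing $v$ together with another triangle sharing one of its edges. Because $v$ has degree $2$ in $G$, the edges $uv$ and $vw$ are boundary edges of $G$ and lie in no other triangle, so the only such new $4$-cycle is $v\text{-}u\text{-}x\text{-}w\text{-}v$, which uses all four colours by the choice of $\alpha$. The delicate point — and the one I expect to be the main obstacle to guard against — is that $c(v)$ must be forced to avoid only three colours, not four; the argument that $uv$ and $vw$ are boundary edges (and therefore contribute no further $4$-cycle constraint on $c(v)$) is exactly what makes the pigeonhole count work with four colours.
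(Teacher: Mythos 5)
Your proof is correct. Note, however, that the paper itself offers no argument for this proposition at all: it is imported as a black box from Tokunaga's paper (cited as Lemma~1 there), so there is no ``paper proof'' to match against. What you have done is supply a self-contained proof of that citation, by induction on $n$ via removal of an ear (a degree-$2$ vertex $v$ whose neighbours $u,w$ are adjacent). The argument is sound: the reduction of the rainbow-$4$-cycle condition to ``opposite vertices of two triangles sharing an edge get different colours'' is justified by chordality; $G-v$ is again maximal outerplanar by the edge count $2(n-1)-3$; and the pigeonhole step works because in a maximal outerplanar graph every triangle is an inner face, so the boundary edge $uw$ of $G-v$ lies in exactly one triangle $uwx$, leaving exactly one free colour for $v$. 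Your closing observation is indeed the crux: since $v$ has degree $2$, the edges $uv$ and $vw$ lie in no second triangle, so the only new $4$-cycle is $v\text{-}u\text{-}x\text{-}w\text{-}v$, and it receives all four colours by construction. The only items you lean on without proof are standard (existence of ears, the $2n-3$ edge count, and that a $4$-cycle through a degree-$2$ vertex must have $uw$ as its chord since a chord at $v$ would raise its degree); all are easy to verify. This makes the paper's use of Proposition~\ref{prop:tokunaga4col} self-contained, at the cost of about a page, and is essentially the standard inductive argument for this kind of ``rainbow $4$-cycle'' colouring of triangulated polygons.
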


In what follows, if $\{u, v, w \}$ induces a triangle in $G$, we 
denote it by $uvw$. Sometimes we will 
abuse this notation and refer to the node that represents $uvw$ in $G'$ 
also as $uvw$. In each case, its meaning will be clear from the context. 
We now proceed to the proof of the lemma.

\begin{lemma}
\label{lemma:gammaleq-outerplanar}
	For every maximal outerplanar graph~$G$,
	{${\gamma(G) \leq \min\{3\gamma(G'), \frac{9}{4}\gamma(G')+t/4}\}$}, 
	where~$t$ is the number of vertices of degree at most 3 in~$G$.
\end{lemma}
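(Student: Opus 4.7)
The plan is to start from a minimum dominating set $D'\subseteq V(G')$ of $G'$ and build $D\subseteq V(G)$ from $V^*:=\bigcup_{T\in D'}T$, the union of the dominating triangles. The preliminary observation I would establish is that $V(G)\subseteq N_G[V^*]$: every $v\in V(G)$ lies in some triangle $T_v$, $T_v$ is dominated in $G'$ by some $T\in D'$, and the shared vertex of $T$ and $T_v$ lies in $V^*\cap N_G[v]$. Taking $D=V^*$ thus gives the easy bound $\gamma(G)\le|V^*|\le 3|D'|=3\gamma(G')$.

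For the sharper bound $\gamma(G)\le\tfrac{9}{4}\gamma(G')+\tfrac{t}{4}$, I would fix a proper $4$-coloring $V(G)=V_1\sqcup V_2\sqcup V_3\sqcup V_4$ via Proposition~\ref{prop:tokunaga4col}, write $V^*_i:=V^*\cap V_i$, and call $v$ \emph{bad for color~$i$} if $N_G[v]\cap V^*\subseteq V_i$, collecting such vertices in $B_i$. The candidate set is $D^i:=(V^*\setminus V^*_i)\cup B_i$, which dominates $G$ by construction: vertices outside $B_i$ have a $V^*$-neighbor of color different from $i$, and vertices in $B_i$ dominate themselves. Because $v\in B_i\cap B_j$ with $i\neq j$ would force $N_G[v]\cap V^*\subseteq V_i\cap V_j=\emptyset$, contradicting the first paragraph, the $B_i$'s are pairwise disjoint. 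Averaging the four sizes then gives
$$\frac{1}{4}\sum_{i=1}^{4}|D^i|\;=\;\frac{3|V^*|}{4}+\frac{|B_1\cup\cdots\cup B_4|}{4}\;\le\;\frac{9}{4}\gamma(G')+\frac{|B_1\cup\cdots\cup B_4|}{4},$$
so some $i$ realises this bound, reducing the goal to the claim $|\bigcup_iB_i|\le t$.

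The hard step is exactly this cardinality bound: the plan is to show that no vertex of degree $\ge 4$ is bad. Fix such a $v$; its neighbors form an induced path $v_1$-$v_2$-$\cdots$-$v_d$ in $G$ (a well-known property of maximal outerplanar graphs), and the triangles through $v$ are $\{v,v_l,v_{l+1}\}$ for $l=1,\dots,d-1$. No triangle of $D'$ may contain $v$, since its three pairwise-adjacent vertices would then all have to be colored $i$. Hence for every $l$, the $D'$-dominator of $\{v,v_l,v_{l+1}\}$ shares with it a vertex in $\{v_l,v_{l+1}\}$, and if $v\in B_i$ this vertex must lie in $V_i$. Applying the rainbow-$C_4$ property of the coloring to the $C_4$'s $v$-$v_l$-$v_{l+1}$-$v_{l+2}$-$v$ yields $c(v_l)=c(v_{l+3})$, so the sequence $c(v_1),c(v_2),c(v_3),\dots$ is $3$-periodic through the three colors in $\{1,2,3,4\}\setminus\{c(v)\}$. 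Since $d\ge 4$, the three consecutive pairs $\{v_1,v_2\},\{v_2,v_3\},\{v_3,v_4\}$ cover all residues modulo $3$, so exactly one of them contains no vertex of color~$i$, contradicting the requirement from the previous sentence. Therefore every bad vertex has degree at most $3$, giving $|\bigcup_iB_i|\le t$ and completing the plan.
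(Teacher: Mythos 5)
Your proposal is correct and takes essentially the same route as the paper: build $X=V^*$ from a minimum dominating set of $G'$, invoke Tokunaga's rainbow-$C_4$ $4$-coloring, show via the induced-path structure of neighborhoods that no vertex of degree at least $4$ can be ``bad'' for a color, and average over the four color-triples using the pairwise disjointness of the sets $B_i$ (your $B_i$ are exactly the paper's $U_{jkl}$). The only cosmetic slip is the phrase ``exactly one'' of the pairs $\{v_l,v_{l+1}\}$ misses color $i$: when $c(v)=i$ all three do, but ``at least one'' is all the contradiction requires.
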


\begin{proof}
Let~$X'$ be a dominating set in~$G'$.
We define a set of vertices~$X$ in~$G$
as the union of all the vertices of the corresponding
nodes in~$G'$. That is,~$X = \bigcup_{abc \in X'}\{a,b,c\}$.
We will show that~$X$ is a dominating set of~$G$.
Let~$u\in V(G)$.
Let~$uvw$ be a triangle in~$G$. As
$X'$ is a dominating set in~$G'$, either~$uvw \in X'$ or $uvw$ 
is adjacent, in $G'$, to a node in $X'$. Therefore, there exists 
a vertex in $\{u, v, w\}$ that belongs to $X$. Thus, 
either~$u \in X$ or~$u$ has a neighbor in~$X$. 
Hence ~$\gamma(G) \leq 3 \gamma(G')$.

Now, consider a $4$-coloring~$\chi=\{C_1,C_2,C_3,C_4\}$ of $G$ as in Proposition \ref{prop:tokunaga4col}.
We will show that
$C_{123} = (C_1 \cup C_2 \cup C_3) \cap X$ 
dominates every vertex of degree at least 4.
Let~$u$ be a vertex of degree at least 4. Since 
the neighborhood of a vertex in an outerplanar 
graph induces a path~{\cite[Theorem~1A]{Allgeier09}}, 
let~$v_1v_2 \ldots v_\ell$, $\ell \geq 4$, be a path in~$G$ such that
every~$v_i$ is a neighbor of~$u$.
Suppose by contradiction that~$C_{123}$ does not dominate~$u$. 
Then~$X \cap \{v_1, v_2, \ldots, v_\ell \} \subseteq C_4$. 
Furthermore, $X \cap \{ v_1, v_2, \ldots, v_\ell \} \neq \emptyset$ 
since $X$ is a dominating set of $G$. 
Let~$v_i \in X \cap \{v_1,v_2, \ldots, v_\ell \}$.
As the edge~$uv_i$ is in at most two triangles, there exists
a triangle~$uv_jv_k$ that do not contain~$v_i$ and $\{u,v_i,v_j,v_k\}$ form a cycle. 
But then~$u, v_j, v_k \notin C_4$. This implies that $u, v_j, v_k \notin X$ and, 
therefore, the node~$uv_jv_k$ is not dominated by~$X'$ in~$G'$, a contradiction. 
Hence,~$C_{123}$ dominates every vertex of degree at least~$4$. 

Thus,
$C_{123} \cup U_{123}$ is a dominating set of~$G$,
where~$U_{123}$ is the set of vertices not dominated by~$C_{123}$.
By repeating the same argument, we conclude that
$C_{ijk} \cup U_{ijk}$ is a dominating set for any~$ijk \in \{123,124,134,234\}$,
where $U_{ijk}$ is the set of vertices not dominated by
$C_{ijk}$.
Now, note that~$U_{ijk} \cap U_{ijk'} = \emptyset$,
for every~$i,j,k,k' \in  \{1,2,3,4\}$, $k \neq k'$. 
Indeed, suppose by contradiction that there exists a vertex~$u \in U_{ijk} \cap U_{ijk'}$.
As~$\{i,j,k,k'\} = \{1,2,3,4\}$, no neighbor of~$u$ is in~$X$, a contradiction to the fact 
that~$X$ dominates~$G$. Finally, if we sum up $|C_{ijk}|$, for each $ijk \in \{123,124,134,234\}$, 
we obtain $3|X|$ as each color $C_i$ is repeated $3$ times in that summation. Hence, by an averaging 
argument, we have that~$\gamma(G) \leq \frac{3|X| + t}{4} = \frac{9|X'| + t}{4}$.
\end{proof}

We now proceed to the proof of our second lemma.

\begin{lemma}\label{lemma:rhogeq-outerplanar}
For every maximal outerplanar graph~$G$,
we have~$\rho(G) \geq \rho(G')$.
\end{lemma}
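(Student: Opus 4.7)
The plan is to prove by induction on $n = |V(G)|$ the following strengthening of the lemma: for every packing $P'$ of $G'$ there exists a packing $P$ of $G$ with $|P|=|P'|$ such that the vertices of $P$ can be indexed by the triangles of $P'$, with each vertex lying in its associated triangle. Applying this to a maximum packing of $G'$ immediately yields $\rho(G) \geq \rho(G')$.

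The base case $n = 3$ is immediate: $G$ is a single triangle, $G'$ has one node, and any vertex of $G$ serves as a packing of size~$1$. For the inductive step with $n \geq 4$, I will use that every maximal outerplanar graph on at least $3$ vertices contains an ear, i.e., a vertex $v$ of degree~$2$. Let $t_m = \{v,a,b\}$ be the unique triangle containing $v$ and set $H = G - v$, which is again a maximal outerplanar graph and whose clique graph is exactly $H' = G' - t_m$. Since $N_G(v) = \{a,b\}$ and $ab \in E(G)$, any path in $G$ through $v$ can be shortcut via the edge $ab$, so distances in $G$ between two vertices of $H$ coincide with distances in $H$; in particular, any packing of $H$ is a packing of $G$.

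Given a packing $P'$ of $G'$, I would split into two cases. If $t_m \notin P'$, then $P'$ remains a packing of $H'$ (deleting a node can only increase pairwise distances), and applying the induction hypothesis to $H$ directly yields the desired packing. If $t_m \in P'$, I apply the induction hypothesis to the packing $P' \setminus \{t_m\}$ of $H'$ to obtain a packing $P_H$ of $H$ with the required bijective property, and set $P = P_H \cup \{v\}$.

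The only real step is to verify, in this second case, that $d_G(v, u) \geq 3$ for every $u \in P_H$. By construction, $u$ lies in some $t \in P' \setminus \{t_m\}$ with $d_{G'}(t, t_m) \geq 3$. Since $d_{G'}(t, t_m) \geq 2$, the triangles $t$ and $t_m$ share no vertex, so $u \notin \{a, b\}$. If $u$ were adjacent in $G$ to, say, $a$, then the edge $ua$ would lie in some triangle $t'$ of $G$ (every edge of a maximal outerplanar graph belongs to a triangle), and $t'$ would share $a$ with $t_m$ and $u$ with $t$, placing $t' \in N_{G'}[t_m] \cap N_{G'}[t]$ and contradicting $d_{G'}(t, t_m) \geq 3$. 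The symmetric argument rules out $u \sim b$, so $u$ lies at distance at least~$3$ from both neighbors of $v$, hence $d_G(v, u) \geq 3$. I do not foresee any substantive obstacle: the induction is a clean ``peel an ear'' argument, and the distance-$3$ condition in $G'$ is exactly strong enough to keep the newly chosen representative of $t_m$ far from every previously chosen representative.
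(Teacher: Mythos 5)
Your proof is correct, and it takes a genuinely different route from the paper's. The paper works top-down on the dual tree $G^{*}$ (a spanning tree of $G'$): it roots $G^{*}$ at a node of the packing, inducts on the height of a node $u$, and maintains the invariant that the packing built inside $G_u$ avoids the two vertices $u \cap p(u)$ shared with the parent triangle; separation is then argued via these two-vertex cutsets in $G$. You instead induct on $|V(G)|$ by deleting an ear $v$, using that $G-v$ is an isometric maximal outerplanar subgraph whose clique graph is exactly $G'-t_m$, and you maintain the invariant that each chosen vertex lies in its assigned triangle of the packing of $G'$; the distance check is then pushed back into $G'$ (an edge $ua$ would produce a triangle adjacent to both $t$ and $t_m$, contradicting $d_{G'}(t,t_m)\geq 3$). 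All the steps you rely on are sound: the ear exists, $G-v$ is maximal outerplanar and isometric in $G$, its clique graph is the induced subgraph $G'-t_m$, induced subgraphs preserve packings, and your case analysis (including ruling out $t'=t$ and $t'=t_m$) is airtight. Your argument is arguably more elementary, since it avoids introducing the rooted dual tree, the subtrees $T_u$, and the separator reasoning; the paper's version, on the other hand, localizes the construction to the subgraphs $G_u$, which is the kind of information one might want to reuse, but for the statement of the lemma itself both approaches deliver the same conclusion.
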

\begin{proof}
First, note that~$G^{*}$ is a spanning
tree of~$G'$. Let~$Z$ be a packing in~$G'$.
We select an arbitrary node in~$Z$, say~$r$,
and direct the edges of~$G^{*}$ away from $r$, 
obtaining a rooted directed tree~$T$ with root~$r$.
For any node~$u$, we denote by~$T_u$ the subtree
of~$T$ rooted at~$u$, and by~$G_u$, the corresponding
induced subgraph in~$G$, that is,
$G_u=G[\{v:v \text{ is a vertex of some node in }T_u\}]$.
For a node~$u \in V(T)$, we denote by~$p(u)$ the 
parent of~$u$ in~$T$, and by~$h(u)$, the height
of~$u$ in~$T$ (see Figure~\ref{fig:outerplanar}). Moreover, 
since the nodes of $G'$ represent sets of vertices (triangles) in $G$, 
if $u, v \in V(G')$, we denote by $u \cap v$ the intersection of these sets. 
In a similar way, $d_G(u, v)$ denotes the distance between 
these sets in $G$.

\begin{figure}[H]
	\centering
	\includegraphics[scale=.5]{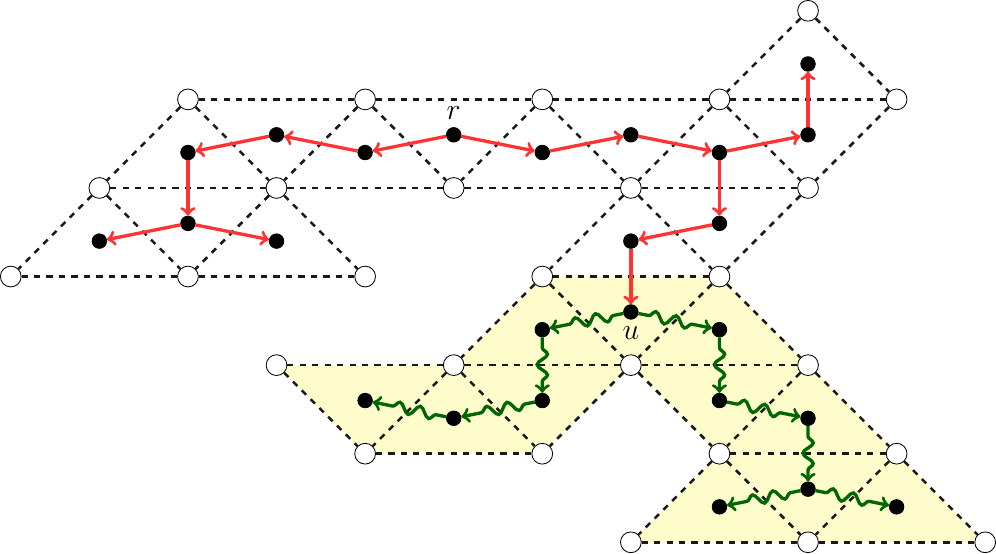}
	\caption{A maximal outerplanar graph~$G$ is represented by the white circles and by the dashed edges. The nodes of~$G^{*}$ are the black circles. 
	By selecting a node~$r$, we obtain the directed tree~$T$. The arcs with curvilinear shape are in~$T_u$, and~$G_u$ is the induced subgraph obtained from the union of the shaded triangles. In this case~$h(u) = 4$.}
	\label{fig:outerplanar}
\end{figure}

Observe that, it suffices to show that, for any~$u \in Z$, 
we have a packing of size~$|Z \cap T_u|$ in~$G_u$. Indeed,
if that is the case, then, for~$u=r$, we have
in~$G$ a packing of size~$|Z\cap T|=|Z|$. We will show this property by 
induction on~$h(u)$. Actually, we will prove a stronger statement.

\begin{claim}\label{claim:YinGusizeZcapTu}
For any~$u \in Z$, there exists a packing~$Y$
in~$G_u$ of size~$|Z \cap T_u|$ such that the vertices
in~$u \cap p(u)$ are not in~$Y$.
\end{claim}
\begin{proof}[Proof of Claim]
By induction on~$h(u)$.
If~$h(u)=0$, then~$u$ is a leave in~$T$, which implies that the only neighbor 
of~$u$ in~$T$ is its parent~$p(u)$.
Hence, if~$u=abc$ and~$u \cap p(u)=\{a,b\}$, then~$Y=\{c\}$ 
and the claim follows. Now, suppose that~$h(u)>0$. Let~$c$ be the vertex in~$u$
that is not in~$u\cap p(u)$. As~$h(u)>0$,~$T_u$
is not a single node.
Let~$Q$ be the set of nodes $q \in (Z \cap T_u) \setminus \{ u \}$ 
such that the path in~$T$ from~$u$ to~$q$ does
not contain any other node in~$(Z \cap T_u) \setminus \{ u \}$.
By the induction hypothesis, for every~$q \in Q$,
there exists a packing~$Y_q$ in~$G_q$ of size
$|Z \cap T_q|$ that does not contain any vertex
in~$q \cap p(q)$.

Let~$Y=\{c\} \cup \bigcup_{q \in Q}Y_q$.
We will show that~$Y$ is a packing in~$G$. 
By the definition of~$Q$, if $q, q'$ are distinct nodes of $Q$, 
then~$G_q \cap G_{q'} = \emptyset$.
Thus, the vertices in~$q \cap p(q)$ separate $Y_q$ 
from~$Y_{q'}$ in~$G$. As~$Y_q$ does not contain any vertex 
in~$q \cap p(q)$, we have that~$d_G(Y_q, Y_{q'}) \geq 3$.
Also, as~$u,q \in Z$, we have~$d_{G'}(u,q) \geq 3$.
This implies that~$d_G( u , q ) \geq 2$.
As the vertices in~$q \cap p(q)$ separate~$Y_q$ from~$c$ in $G$ and, 
as~$Y_q$ does not contain any vertex in $q \cap p(q)$, we have that~$d_G( c, Y_q) \geq 3$.
We conclude that~$Y$ is a packing in~$G$ with the desired properties.
\end{proof}

By Claim~\ref{claim:YinGusizeZcapTu}, with~$u = r$, we have in~$G$ a packing of size~$|Z|$. Hence,
$\rho(G) \geq |Z| = \rho(G')$, as we wanted.
\end{proof}



\section{Biconvex graphs} \label{sec:biconvex}

In this section, we show that~$\gamma(G) \leq 2\rho(G)$ on 
biconvex graphs. Also, we exhibit a family of biconvex graphs 
that attain this bound. Let~$G$ be a connected bipartite graph 
with parts~$X$ and~$Y$. We say that~$X$ has a \textit{convex ordering} 
if there exists an ordering of its vertices, say~$\langle x_1, \ldots, x_n \rangle$, such 
that~$N_G(y)$ consists of consecutive vertices in that ordering, for 
each~$y \in Y$. Furthermore, we say that~$G$ is \textit{biconvex} if 
both~$X$ and~$Y$ admit a convex ordering. A subclass of biconvex graphs 
that will be of our interest are the bipartite permutation graphs. 
Yu \& Chen~\cite{YuC95} showed that, given a biconvex graph~$G$ with parts~$X$ 
and~$Y$, we can obtain a bipartite permutation graph from~$G$ after removing 
some (maybe none) of the first and last vertices of a convex ordering of~$X$. 
We will use this result along with a structural result regarding 
bipartite permutation graphs to show the desired upper bound on~$\gamma(G)$. 
In what follows, we present these results and the necessary definitions. 

Let~$G$ be a connected biconvex graph with parts~$X$ and~$Y$.
Also, let~$\langle x_1, \ldots, x_n \rangle$ and~$\langle y_1, \ldots, y_m \rangle$ be convex 
orderings of~$X$ and~$Y$, respectively. Let~$x_L$ (resp.~$x_R$) be the 
vertex in~$N(y_1)$ (resp.~$N(y_m)$) such that~$N(x_L)$ (resp.~$N(x_R)$) is not 
properly contained in any other neighborhood set. In case of ties, we choose 
as~$x_L$ (resp.~$x_R$)  the smallest (resp.~largest) vertex that satisfies the 
previous condition. Without loss of generality, suppose that~$x_L \leq x_R$, 
otherwise consider the reverse ordering of~$X$. Let~$G_p = G[X_p \cup Y]$ where 
$X_p = \{ x_i : x_L \leq x \leq x_R\}$. Yu \& Chen~\cite{YuC95} showed the 
following (see also~{\cite[Lemma~9]{AbbasL00}}).

\begin{lemma}[{\cite[Lemma 7]{YuC95}}] \label{lem:yu}
	$G_p$ is a connected bipartite permutation graph.
\end{lemma}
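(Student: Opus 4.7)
The plan is to use the well-known characterization that a bipartite graph is a bipartite permutation graph if and only if its parts admit a \emph{strong ordering}: linear orderings of $X$ and $Y$ such that whenever $x_iy_{j'},\, x_{i'}y_j\in E(G)$ with $i<i'$ and $j<j'$, both $x_iy_j$ and $x_{i'}y_{j'}$ are also edges of $G$. I would take the convex orderings of $X_p$ and $Y$ that $G_p$ inherits from $G$ and verify that together they form a strong ordering of $G_p$.

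The first subtask is to check connectedness. Every vertex of $X_p$ trivially has a neighbor in $Y$, so it suffices to prove that every $y\in Y$ has a neighbor in $X_p$. Since $N_G(y)$ is a consecutive block in the convex ordering of $X$, if $N_G(y)\cap X_p=\emptyset$ then $N_G(y)$ lies entirely to the left of $x_L$ or entirely to the right of $x_R$. Suppose the first case holds; using connectedness of $G$ one obtains a shortest $y$-to-$y_1$ path, and by tracing this path and exploiting that $N(x_L)$ is inclusion-maximal among the neighborhoods of vertices in $N(y_1)$, one derives that some $x\in X_p$ must also lie in $N_G(y)$, a contradiction. The right-hand case is symmetric, using $x_R$ and $y_m$.

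The second subtask is to verify the strong ordering on $G_p$. I would assume, for contradiction, a violation inside $G_p$: indices $i<i'$ with $x_i,x_{i'}\in X_p$ and $j<j'$ with $x_iy_{j'},\, x_{i'}y_j\in E(G)$ but say $x_iy_j\notin E(G)$. Convexity of $N_G(x_i)$ in $Y$ then forces $N_G(x_i)$ to begin at some $y_k$ with $k>j$, while convexity of $N_G(y_j)$ in $X$ forces $x_i$'s predecessors to leave the block containing $x_{i'}$. Chasing these constraints toward $y_1$ or $y_m$ and comparing with the choice of $x_L$ (inclusion-maximal in $N(y_1)$) and $x_R$ (inclusion-maximal in $N(y_m)$) shows that the witnessing vertex must actually sit outside $[x_L,x_R]$, contradicting the assumption that $x_i,x_{i'}\in X_p$.

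The main obstacle, and the reason trimming to $X_p$ is required, is that the two convex orderings on $X$ and $Y$ need not be simultaneously strong in $G$: the vertices excluded from $X_p$ are exactly those whose neighborhoods are properly contained in $N(x_L)$ or $N(x_R)$, and these are precisely the vertices that could witness a violation of the strong-ordering property. The delicate step is therefore the bookkeeping in the previous paragraph, where one must track how a local non-edge propagates through the two convexity constraints and terminates at a vertex forbidden by the maximality of $N(x_L)$ or $N(x_R)$.
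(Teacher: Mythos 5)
This lemma is not proved in the paper at all: it is imported verbatim from Yu \& Chen \cite{YuC95} (their Lemma~7), so there is no in-paper argument to compare against and your proposal has to stand on its own. Your overall strategy --- reduce to the strong-ordering characterization of bipartite permutation graphs, then check (a) connectivity of $G_p$ and (b) that the inherited orderings restricted to $X_p$ and $Y$ are strong --- is the natural one and is essentially the route taken in the original sources. The problem is that, as written, both substantive steps are placeholders. In the connectivity step, ``by tracing this path and exploiting that $N(x_L)$ is inclusion-maximal \dots one derives that some $x\in X_p$ must also lie in $N_G(y)$'' names no mechanism; in the strong-ordering step, ``chasing these constraints toward $y_1$ or $y_m$ \dots shows that the witnessing vertex must actually sit outside $[x_L,x_R]$'' is precisely the entire content of the lemma, and you give no indication of how the chase is carried out or why it terminates at a contradiction. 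A reader cannot reconstruct the proof from this.

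There is also a genuine gap of substance, not just of detail: you tacitly assume that the convex orderings of $X$ and $Y$ you started with automatically restrict to a strong ordering of $G_p$. That is not obviously true for an \emph{arbitrary} pair of convex orderings --- for instance, for the path $x_1y_1x_2y_2x_3y_3$, reversing only the $Y$-side still yields convex orderings of $G$ but destroys the strong-ordering property on $G$ itself --- and the corresponding statement in the literature (Abbas \& Stewart, quoted as Lemma~\ref{lem:lorna} in this paper) is phrased as an \emph{existence} claim: there exist convex orderings for which the restriction is strong. So your argument needs either a preliminary normalization of the orderings (beyond the $x_L\le x_R$ convention) or a proof that the convention already forces the right orientation and breaks ties correctly; neither appears in the proposal. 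If you intend to supply a full proof rather than cite \cite{YuC95}, this is the point that requires real work.
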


Consider the convex orderings~$\langle x_1, \ldots, x_n \rangle$ and~$\langle y_1, \ldots, y_m \rangle$. 
These orderings form a \textit{strong} ordering of~$V(G)$ if~$x_iy_a, x_ky_c \in E(G)$ 
whenever~$x_iy_c, x_ky_a \in E(G)$, for~$i \leq k$ and~$a \leq c$. 
Abbas \& Stewart~\cite{AbbasL00} proved the following properties of~$X - X_p$.
	
\begin{lemma}[{\cite[Lemma 10]{AbbasL00}}] \label{lem:lorna}
	Let~$G$ be a biconvex graph with parts~$X$ and~$Y$. There exist 
	convex orderings~$\langle x_1, \ldots, x_n \rangle$ 
	and~$\langle y_1, \ldots, y_m \rangle$ that satisfy the following properties:
	\begin{enumerate}[$a)$]
		\item $G_p$ is a connected bipartite permutation graph.
		\item $X_p = \langle x_L, x_{L+1}, \ldots, x_R \rangle$ and~$Y = \langle y_1, y_2, \ldots, y_m \rangle$ 
		is a strong ordering of~$V(G_p)$.
		\item For all~$x_i$ and~$x_j$, where~$x_1 \leq x_i < x_j \leq x_L$ 
		or~$x_R \leq x_j < x_i \leq x_n$, we have that~$N(x_i) \subseteq N(x_j)$. \label{it:important} 
	\end{enumerate}
\end{lemma}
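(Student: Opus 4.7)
The plan is to construct the desired orderings in two layers: an inner layer on $X_p$ and $Y$ inherited from the bipartite permutation structure of $G_p$, and an outer layer on $X \setminus X_p$ obtained by sorting the external vertices according to neighborhood inclusion. I would first fix any convex orderings of $X$ and $Y$ guaranteed by biconvexity, identify $x_L$ and $x_R$, and apply Lemma~\ref{lem:yu} to obtain property (a). For property (b), I would invoke the classical characterization that a bipartite graph is a permutation graph if and only if its two sides admit a \emph{strong} ordering; applied to $G_p$ this furnishes orderings $\sigma_{X_p}^\star$ of $X_p$ and $\sigma_Y^\star$ of $Y$ that are strong for $G_p$. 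I would then adopt $\sigma_Y^\star$ as the convex ordering of $Y$ in $G$, and extend $\sigma_{X_p}^\star$ to a full convex ordering of $X$ by placing the vertices $x_1,\dots,x_{L-1}$ to the left of $X_p$ and $x_{R+1},\dots,x_n$ to the right, in orders to be specified.

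For property (c), the orderings on the two tails must be chosen so that neighborhoods are nested. The engine here is the following sub-claim: $N(x_i) \subseteq N(x_L)$ for every $i < L$, and symmetrically $N(x_i) \subseteq N(x_R)$ for every $i > R$. Assuming the sub-claim, I would sort the left tail in non-decreasing order of neighborhood and the right tail in non-increasing order (breaking ties arbitrarily). The nesting immediately yields (c); moreover, each $y \in Y$ then meets the left tail in a suffix of the sorted list and the right tail in a prefix, while it meets $X_p$ in the contiguous block provided by $\sigma_{X_p}^\star$. Hence the global $X$-ordering stays convex, and its restriction to $X_p$ still forms a strong ordering of $G_p$.

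The main obstacle is the sub-claim, which genuinely needs the hypothesis that $G$ is connected (without it the matching $\{x_1y_2,\, x_2y_1\}$ already produces $x_L = x_2$ with $N(x_1) = \{y_2\} \not\subseteq \{y_1\} = N(x_L)$). I would argue by contradiction: suppose $y_j \in N(x_i) \setminus N(x_L)$ with $i < L$. By convexity of $N(x_L)$ in $Y$ together with $y_1 \in N(x_L)$, we have $N(x_L) = \{y_1,\dots,y_k\}$ for some $k$, whence $j > k$; by convexity of $N(y_j)$ in $X$, every neighbor of $y_j$ lies strictly to the left of $x_L$. Using connectedness of $G$, I would then trace a walk in $G$ joining $y_j$ to $x_L$ and, invoking convexity of both $X$- and $Y$-orderings together with the extremal choice of $x_L$ (smallest-index vertex of $N(y_1)$ with inclusion-maximal neighborhood), extract a vertex $x_\alpha$ whose neighborhood in $Y$ contains both $y_j$ and all of $\{y_1,\dots,y_k\}$, contradicting the maximality of $N(x_L)$. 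Once the sub-claim is established, verifying that the constructed orderings are convex and that their restrictions to $G_p$ remain strong is a matter of bookkeeping.
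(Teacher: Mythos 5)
First, a caveat: the paper does not prove this statement at all --- it is imported verbatim as Lemma~10 of Abbas and Stewart \cite{AbbasL00} --- so there is no in-paper argument to compare yours against, and I can only judge your sketch on its own terms. Doing so, the decisive gap is in your derivation of property~(c). Your sub-claim asserts only that $N(x_i)\subseteq N(x_L)$ for every $i<L$ (and symmetrically on the right), but property~(c) demands that the neighborhoods of the left-tail vertices be \emph{pairwise} nested, i.e., form a chain under inclusion; otherwise no ordering of the tail can realize~(c). ``Sorting the tail in non-decreasing order of neighborhood'' presupposes exactly this chain condition, and it does not follow from common containment in $N(x_L)$: two intervals of $\langle y_1,\ldots,y_m\rangle$ both contained in the interval $N(x_L)=\{y_1,\ldots,y_k\}$, say $\{y_1,y_2\}$ and $\{y_2,y_3\}$, are incomparable. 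Ruling such configurations out among the tail vertices is the actual content of part~(c) and needs its own argument (again via connectedness, convexity of both sides, and the extremal choice of $x_L$); your plan treats it as immediate. The same chain property is also what you silently invoke when asserting that each $y\in Y$ meets the re-sorted left tail in a suffix, so without it the convexity of the reassembled $X$-ordering is unproven as well.

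Two further soft spots. (i) The sub-claim itself is only gestured at: ``trace a walk \dots\ and extract a vertex $x_\alpha$'' is the hard step and is not carried out; in particular you need $x_\alpha\in N(y_1)$ with $N(x_L)\subsetneq N(x_\alpha)$ (or however the tie-breaking in the definition of $x_L$ is to be contradicted), and the walk argument as described does not yet produce such a vertex. (ii) Replacing the $Y$-ordering by the strong ordering $\sigma_Y^\star$ is not innocuous: $x_L$, $x_R$, and hence $G_p$ and the tails are defined in terms of $y_1$ and $y_m$ of the chosen $Y$-ordering, so after re-ordering $Y$ you must re-verify that the partition of $X$ into $X_p$ and the two tails is unchanged, or restart the construction and show it stabilizes. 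None of this means the approach is unworkable --- the statement is true, and your two-layer architecture matches the standard picture of a biconvex graph as a permutation core with nested tails --- but as written the proposal defers precisely the steps that make the lemma nontrivial.
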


Now, we describe a decomposition of a bipartite permutation graph that along 
with Lemma~\ref{lem:lorna} give us the necessary tools to show the main 
result of this section. Let~$H$ be a bipartite permutation graph 
with convex orderings~$A = \langle a_1, \ldots, a_n \rangle$ and~$B = \langle b_1, \ldots, b_m \rangle$. 
Uehara \& Valiente~\cite{UeharaV07} showed that~$H$ can be decomposed into 
subgraphs~$(K_1 \cup J_1), \ldots, (K_k \cup J_k)$, called a \textit{complete bipartite decomposition} 
of~$H$, in the following way. Let~$K_1$ be the graph induced by~$N_H(a_1) \cup N_H(b_1)$. Let~$J_1$ be the 
set of isolated vertices in~$H - K_1$ (maybe empty). Next, consider the 
graph~$H' = H - (K_1 \cup J_1)$ and repeat this process until the graph becomes empty.
For~$i = 1, \ldots, k$, let~$\ell_A(K_i)$ and~$r_A(K_i)$ be the vertices 
in~$A \cap V(K_i)$ such that~$\ell_A(K_i)\leq a \leq r_A(K_i)$, for each vertex~$a \in A \cap V(K_i)$. 
Analogously, we define~$\ell_B(K_i)$ and~$r_B(K_i)$. We say that subgraphs~$H_1$ and~$H_2$ of~$H$ 
are adjacent if there exists an edge, in~$H$, with one end in~$H_1$ and the other end in~$H_2$.
Uehara \& Valiente~{\cite[Lemma~1, Lemma~2, Theorem~5]{UeharaV07}} showed that a complete bipartite decomposition of~$H$ 
satisfies the following properties.

\begin{enumerate}[$i)$]
	\item $K_i$ is a complete bipartite graph (with at least one edge); and~$K_i$ 
	is adjacent to~$K_{j}$ if only if~$|i - j| = 1$. \label{it:complete}
	\item Either~$J_i \subseteq A$ or~$J_i \subseteq B$; moreover~$N_H(J_i) \subseteq V(K_i)$, 
	and if~$a, a' \in J_i$ and~$a \leq a'$, then~$N_H(a') \subseteq N_H(a)$. \label{it:independent}
	\item If~$J_i \subseteq B$ (resp.~$J_i \subseteq A$), then~$r_A(K_i) \in N_H( z )$ 
	(resp.~$r_Y(K_i) \in N_H(z)$), for every vertex~$z \in J_i$. \label{it:ji} 
\end{enumerate}

\begin{figure}
	\centering
	\includegraphics[scale=.5]{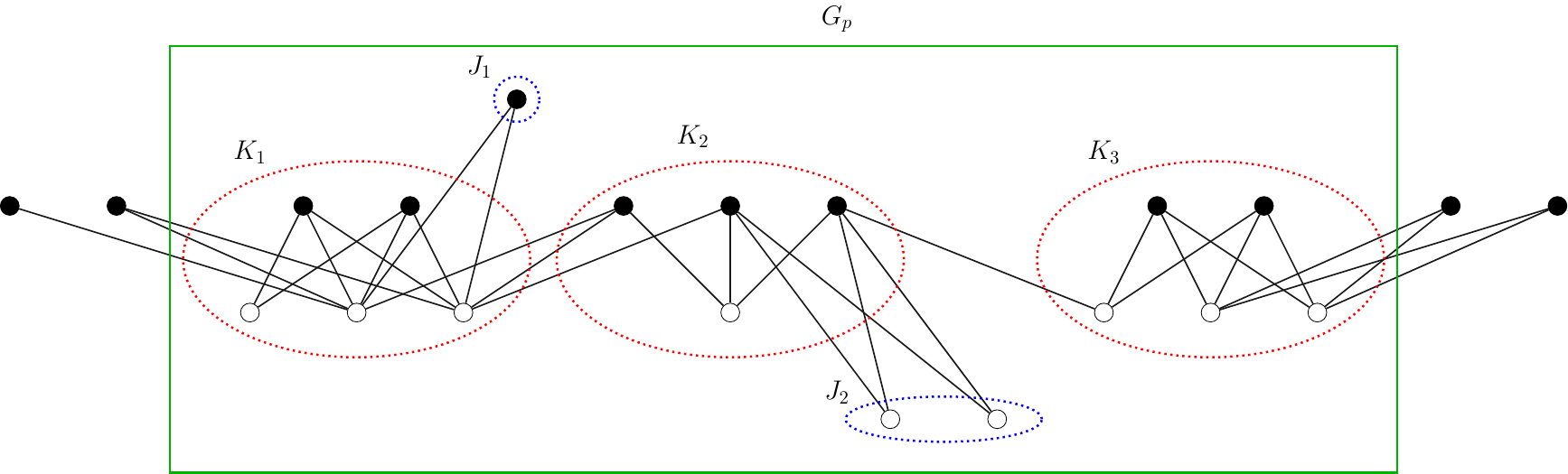}
	\caption{The structure of a biconvex graph~$G$.}
	\label{fig:biconvex}
\end{figure}

Let~$G$ be a connected biconvex graph with parts~$X$ and~$Y$, and let~$G_p$ the bipartite 
permutation graph given by Lemma~\ref{lem:lorna}. Furthermore, let~$(K_1 \cup J_1), \ldots, (K_k \cup J_k)$ 
be a complete bipartite decomposition of~$G_p$. In what follows, we call~$k$ 
the \textit{decomposition width} of~$G$, and denote it by~$\dw(G)$. 
Figure~\ref{fig:biconvex} depicts the structure of~$G$. The following two results 
relate~$\gamma(G)$ and~$\rho(G)$ to~$\dw(G)$. Throughout the proofs of these results, 
we use properties~\ref{it:complete}, \ref{it:independent} and~\ref{it:ji}.

\begin{lemma} \label{lem:bi-pack}
	Let~$G$ be a connected biconvex graph with parts~$X$ and~$Y$ 
	such that~$\dw(G) = k$. Then,~$\rho(G) \geq k$. Furthermore, 
	if~$\max\{ d_G(x_1, a) : a \in J_1 \} \geq 3$ or~$N_G(x_n) \subseteq J_k$, 
	then~$\rho(G) \geq k + 1$.
\end{lemma}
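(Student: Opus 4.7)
The plan is to construct an explicit packing in $G$ of size $k$ (and $k+1$ under the additional hypotheses) using the complete bipartite decomposition $(K_1\cup J_1),\ldots,(K_k\cup J_k)$ of the bipartite permutation subgraph $G_p$. For each $i\in\{1,\ldots,k\}$ I would pick one canonical vertex $v_i\in V(K_i)$, alternating the side of the bipartition from which $v_i$ is taken as $i$ increases---for instance, picking the leftmost vertex of $K_i$ on the $X_p$-side when $i$ is odd and on the $Y$-side when $i$ is even. Under the strong ordering of Lemma~\ref{lem:lorna}, properties \ref{it:complete}--\ref{it:ji} imply that the edges between $V(K_i)$ and $V(K_{i+1})$ involve only specific ``boundary'' vertices on the rightmost side of $K_i$ and the leftmost side of $K_{i+1}$; the alternation places $v_i$ and $v_{i+1}$ on opposite sides and away from these boundaries, yielding $d_{G_p}(v_i,v_{i+1})\geq 3$. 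For $|i-j|\geq 2$ a separating biclique $K_\ell$ together with the same boundary-only adjacency force $d_{G_p}(v_i,v_j)\geq 3$ as well.

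To transfer the packing from $G_p$ to the ambient graph $G$, I would appeal to Lemma~\ref{lem:lorna}\ref{it:important}: any vertex $x\in X\setminus X_p$ satisfies $N_G(x)\subseteq N_G(x_L)$ or $N_G(x)\subseteq N_G(x_R)$, both of which are contained in $V(K_1)$ or $V(K_k)$, respectively. Thus a shortcut in $G$ through $X\setminus X_p$ only replaces a path within $V(K_1)$ or within $V(K_k)$, and since the canonical vertices $v_i$ already lie at distance $\geq 3$ from each other inside $G_p$, their pairwise distances do not shrink in~$G$.

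For the strengthened bound $\rho(G)\geq k+1$, I would augment the previous packing by one carefully chosen vertex. \emph{Case (i):} if there exists $a^*\in J_1$ with $d_G(x_1,a^*)\geq 3$, I would discard $v_1$ and add both $x_1$ and $a^*$. The neighborhood of $x_1$ in $G$ is contained in $V(K_1)$ by Lemma~\ref{lem:lorna}\ref{it:important}, and the neighborhood of $a^*$ is also contained in $V(K_1)$: membership in $J_1$ forces all of its $G_p$-neighbors to lie in $V(K_1)$, and since $a^*\notin V(K_k)$, Lemma~\ref{lem:lorna}\ref{it:important} precludes any additional $G$-edges from the right side of $X$ reaching $a^*$. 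Consequently $d_G(x_1,v_i)$ and $d_G(a^*,v_i)$ are $\geq 3$ for every $i\geq 2$, by the very same argument that gave $d_{G_p}(v_1,v_i)\geq 3$. \emph{Case (ii):} if $N_G(x_n)\subseteq J_k$, then $x_n$ reaches $V(K_k)$ only through $J_k$, and by property~\ref{it:ji} every $z\in J_k$ is adjacent in $G_p$ to the rightmost $X_p$-vertex of $K_k$; by choosing $v_k$ to be the leftmost vertex on the opposite side of $K_k$, I ensure that no $z\in J_k$ is a common neighbor of $x_n$ and $v_k$, giving $d_G(x_n,v_k)\geq 3$, and I add $x_n$ to the packing.

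The main obstacle is pinning down the correct parity/side convention for the canonical choices $v_i$: a naive choice such as always picking the leftmost $X_p$-vertex of each $K_i$ can fail because consecutive bicliques may share a connection through the same side, bringing $v_i$ and $v_{i+1}$ to distance only $2$, as small examples readily show. The alternation is essential, and its verification requires a careful combination of properties \ref{it:complete}--\ref{it:ji} of the complete bipartite decomposition together with the strong ordering of Lemma~\ref{lem:lorna}. A secondary challenge is confirming that the two hypotheses in the ``furthermore'' part correspond precisely to the situations where the natural augmenting vertices---$x_1$ together with a vertex of $J_1$, or $x_n$---succeed in extending the packing without disturbing the rest.
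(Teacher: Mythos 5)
Your construction of the size-$k$ packing is essentially the paper's: the paper takes $S=\{\ell_X(K_i): i \text{ odd}\}\cup\{\ell_Y(K_i): i \text{ even}\}$, which is exactly your alternating choice of leftmost vertices, and justifies the distance bounds via $N_G(\ell_X(K_i))\cap V(K_{i+1})=\emptyset$. Your extra care about shortcuts through $X\setminus X_p$ is sound (and more explicit than the paper). The case $N_G(x_n)\subseteq J_k$ is also fine in substance, although you should not relocate $v_k$ to the $Y$-side of $K_k$: if $k$ is odd this can put $v_{k-1}=\ell_Y(K_{k-1})$ and your new $v_k=\ell_Y(K_k)$ at distance $2$ through $r_X(K_{k-1})$. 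The paper instead shows that $x_n$ is already at distance at least $3$ from \emph{both} $\ell_X(K_k)$ and $\ell_Y(K_k)$, so no relocation is needed.

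The genuine gap is in your Case (i). When $J_1\subseteq Y$ --- a case that really occurs, since whenever $J_1\subseteq Y$ is nonempty the hypothesis $\max\{d_G(x_1,a):a\in J_1\}\geq 3$ holds automatically (as $N_G(x_1)\subseteq N_G(\ell_X(K_1))$ misses $J_1$) --- your augmenting vertex $a^*$ lies on the $Y$-side while the discarded $v_1=\ell_X(K_1)$ lay on the $X$-side, so the parity of your alternation is wrong from $K_2$ onward, and the claim that $d_G(a^*,v_i)\geq 3$ follows ``by the very same argument'' fails. Concretely, $v_2=\ell_Y(K_2)$ and $a^*$ are both in $Y$; property~\ref{it:ji} forces $a^*$ to be adjacent to $r_X(K_1)$, and $\ell_Y(K_2)$ may also be adjacent to $r_X(K_1)$ (this is exactly how $K_1$ and $K_2$ can be adjacent), giving $d_G(a^*,\ell_Y(K_2))=2$. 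The paper handles this by splitting on whether $J_1\subseteq X$ or $J_1\subseteq Y$ and, in the latter case, flipping the parity of the whole residual sequence (taking $\ell_X(K_2)$, $\ell_Y(K_3)$, \dots); it then still must prove the nontrivial fact $N_G(x_1)\cap N_G(\ell_X(K_2))=\emptyset$, which it does using the convexity of the $Y$-ordering. You correctly identified the parity convention as the crux of the lemma, but the augmentation step is precisely where it has to be re-chosen, and your sketch does not do so.
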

\begin{proof}
	Let~$(K_1 \cup J_1), \ldots, (K_k \cup J_k)$ be a complete bipartite decomposition of~$G_p$. 
	First, we will show that~$\rho(G) \geq k$. Let 
	\[
		S := \{ \ell_{X}(K_{i}) : 1 \leq i \leq k, \, i \text{ is odd} \} \cup \{ \ell_{Y}(K_{i}) : 1 \leq i \leq k, \, i \text{ is even} \}.
	\]
	Note that~$|S| = k$. Now, we show that~$S$ is a packing of~$G$. By definition of~$K_i$, we have 
	that~$N_G( \ell_X(K_i) ) \cap V(K_{i + 1}) = \emptyset$. Thus~\ref{it:complete} implies that the 
	distance between~$\ell_X(K_i)$ and~$\ell_X(K_j)$ is at least 4, if~$i \neq j$ and~$i, j$ are odd. 
	The same arguments applies to~$\ell_Y(K_i)$ and~$\ell_Y(K_j)$ when~$i \neq j$ and~$i, j$ are even.
	Finally, by~\ref{it:complete}, the only case left to consider is a pair of vertices~$\ell_X(K_i)$ 
	and~$\ell_Y(K_{i + 1})$. Since~$N_G( \ell_X(K_i) ) \cap V(K_{i + 1}) = \emptyset$, the distance 
	between~$\ell_X(K_i)$ and~$\ell_Y(K_{i+1})$ is at least 3. Therefore,~$\rho(G) \geq k$. 

	Now, suppose that~$N_G(x_n) \subseteq J_k$. By~\ref{it:independent}, the set~$J_k \subseteq Y$, 
	and by the definition of~$K_k$, we have that~$N_G(\ell_X(K_k)) \cap J_k = \emptyset$. This 
	implies that the distance between~$x_n$ and the set~$\{ \ell_X(K_k), \ell_Y(K_k) \}$ is at least 3. 
	Thus, if we consider the set~$S$ defined before, then~$S' = S \cup \{ x_n \}$ is a packing of~$G$. 
	Therefore~$\rho(G) \geq k + 1$.

	Finally, suppose that~$\max\{ d_G(x_1, a) : a \in J_1 \} \geq 3$. We distinguish two cases. 
	
	\vspace{2mm}
	\noindent \textbf{Case 1:} $J_1 \subseteq X$.

	Let~$x' \in J_1$ such that~$d_G(x_1, x') \geq 3$, and consider the following set:
	\[
		S := \{ \ell_X(K_i) : 2 \leq i \leq k, i \text{ is odd} \} \cup \{ \ell_Y(K_i) : 2 \leq i \leq k, i \text{ is even}  \}.
	\]
	Then,~$|S| = k - 1$. By similar arguments as before,~$S$ is a packing of~$G$. Now, 
	consider the set~$S' = S \cup \{ x_1, x' \}$. Since~$N_G(x_1) \cup N_G(x') \subseteq V(K_1)$, we have 
	that the distance between~$\ell_Y(K_2)$ and~$\{ x_1, x' \}$ is at least 3 and, thus~$S'$ is 
	a packing of~$G$. 

	\vspace{2mm}
	\noindent \textbf{Case 2:} $J_1 \subseteq Y$.

	In this case, consider the set 
	\[
		S := \{ \ell_X(K_i) : 2 \leq i \leq k, i \text{ is even} \} \cup \{ \ell_Y(K_i) : 2 \leq i \leq k, i \text{ is odd} \}.
	\]
	As in the previous case~$S$ is a packing of~$G$ such that~$|S| = k - 1$. Consider the set~$S' = S \cup \{ x_1, y' \}$ 
	where~$d_G(x_1, y') \geq 3$,~$y' \in J_1$. As~$N_G(y') \subseteq V(K_1)$, we have that~$d_G(y', \ell_X(K_2)) \geq 3$. 
	Finally, we will show that~$N_G(x_1) \cap N_G(\ell_X(K_2)) = \emptyset$. Suppose by contradiction that there exists 
	a vertex~$y^* \in N_G(x_1) \cap N_G(\ell_X(K_2))$. 
	Since~$N_G(x_1) \subseteq V(K_1)$, we have that~$y^* < y' < \ell_Y(K_2)$. As~$G$ is biconvex, this implies 
	that~$\ell_X(K_2)$ is adjacent to~$y' \in J_1$. This contradicts the fact that~$J_1$ is a set of isolated 
	vertices in~$G_p - K_1$. Thus~$S'$ is a packing of~$G$.
	
	\vspace{2mm}
	\noindent Therefore~$\rho(G) \geq k + 1$. 
\end{proof}

\begin{lemma} \label{lem:bi-dom}
	Let~$G$ be a connected biconvex graph with parts~$X$ and~$Y$ such that~$\dw(G) = k \geq 2$. 
	If~$\max\{ d_G(x_1, a) : a \in J_1 \} \leq 2$ and~$N_G(x_n) \cap V(K_k) \neq \emptyset$, 
	then~$\gamma(G) \leq 2k$; otherwise we have that~$\gamma(G) \leq 2k + 2$.
\end{lemma}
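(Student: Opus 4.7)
The plan is to build a dominating set $D$ by selecting, for each $i\in\{1,\ldots,k\}$, one vertex $x^i\in V(K_i)\cap X$ and one vertex $y^i\in V(K_i)\cap Y$. Since each $K_i$ is complete bipartite by property~\ref{it:complete}, the pair $\{x^i,y^i\}$ already dominates $V(K_i)$, so $D:=\bigcup_{i=1}^{k}\{x^i,y^i\}$ has size $2k$ and covers $V(K_1)\cup\cdots\cup V(K_k)$. For the intermediate indices $1<i<k$ I would take $x^i=r_X(K_i)$ and $y^i=r_Y(K_i)$, which by property~\ref{it:ji} also dominates $J_i$ irrespective of whether $J_i\subseteq X$ or $J_i\subseteq Y$. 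The remaining work is to choose $x^1,y^1$ and $x^k,y^k$ so as to dominate the left tail $\{x_1,\ldots,x_{L-1}\}$ (when $L>1$), the right tail $\{x_{R+1},\ldots,x_n\}$ (when $R<n$), and the sets $J_1$ and $J_k$ at the boundary.

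A key preliminary observation for the left tail is that $N_G(x_1)\subseteq V(K_1)\cap Y$: Lemma~\ref{lem:lorna}\ref{it:important} gives $N(x_1)\subseteq N(x_L)$, and since $x_L=\ell_X(K_1)$ is the seed of the first iteration of the decomposition we have $N_{G_p}(x_L)\subseteq V(K_1)$ from the very definition of $K_1$. Combined with the nesting $N(x_1)\subseteq\cdots\subseteq N(x_L)$, any single vertex $y\in N(x_1)$ dominates the whole left tail. To also handle $J_1$ with the same $y^1$, I would split on the location of $J_1$: if $J_1=\emptyset$ or $J_1\subseteq Y$, then $x^1=r_X(K_1)$ dominates $J_1$ via property~\ref{it:ji} and any $y^1\in N(x_1)$ works; if $J_1\subseteq X$, property~\ref{it:independent} says the $J_1$-neighborhoods are linearly nested, so it suffices to pick $y^1\in N(a^*)\cap N(x_1)$ where $a^*\in J_1$ is the vertex of smallest neighborhood, and the first hypothesis $\max_{a\in J_1}d_G(x_1,a)\leq 2$ guarantees this intersection is non-empty because two vertices of the same part $X$ at distance at most $2$ must share a $Y$-neighbor.

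For the right tail I would use the second hypothesis directly: pick $y^k\in N(x_n)\cap V(K_k)\cap Y$, which exists by assumption, and the nesting $N(x_n)\subseteq\cdots\subseteq N(x_R)$ from Lemma~\ref{lem:lorna}\ref{it:important} ensures $y^k$ dominates every right-tail vertex. Setting $x^k=r_X(K_k)$ takes care of $J_k$ when $J_k\subseteq Y$ or $J_k=\emptyset$. The sub-case $J_k\subseteq X$ is the subtlest point: here I would combine the fact that $y^k$ is adjacent to every vertex of $V(K_k)\cap X$ with the nesting of $J_k$-neighborhoods inside $V(K_k)\cap Y$ (property~\ref{it:independent}) to refine the choice of $y^k$ so that it also dominates $J_k$. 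In the good case this gives $|D|=2k$.

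For the bad case, whenever the first hypothesis fails I would keep the default $y^1=r_Y(K_1)$---which still dominates $J_1$ when $J_1\subseteq X$ by property~\ref{it:ji}---and add a single extra vertex $y^+\in N(x_1)\subseteq V(K_1)\cap Y$ to handle the left tail; symmetrically, when the second hypothesis fails I would add a single extra vertex of $N(x_n)$, which is non-empty by connectivity. This costs at most two additional vertices, yielding $|D|\leq 2k+2$. The main obstacle I expect is the book-keeping across the three possibilities for each of $J_1$ and $J_k$ (empty, in $X$, or in $Y$), and in particular the sub-case $J_k\subseteq X$, where the second hypothesis alone has to force a single $y^k$ to dominate both the right tail and all of $J_k$ without the benefit of an analogue of the first hypothesis.
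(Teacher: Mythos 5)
Your plan is essentially the paper's proof: the pairs $\{r_X(K_i),r_Y(K_i)\}$ for the middle blocks, the two hypotheses used exactly as you use them so that $K_1\cup J_1$ together with the left tail, and $K_k\cup J_k$ together with the right tail, are each dominated by two vertices, and two extra neighbours of $x_1$ and $x_n$ in the remaining case. The one sub-case you flag as unresolved ($J_k\subseteq X$) is closed by biconvexity rather than by the nesting of the $J_k$-neighbourhoods: any $y^k\in N_G(x_n)\cap V(K_k)\cap Y$ is adjacent to $\ell_X(K_k)$ (since $K_k$ is complete bipartite) and to $x_n$, so $N_G(y^k)$ is an interval of the convex ordering of $X$ containing $[\ell_X(K_k),x_n]\supseteq J_k$, and no refinement of the choice of $y^k$ is needed.
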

\begin{proof}
	Let~$(K_1 \cup J_1), \ldots, (K_k \cup J_k)$ be a complete bipartite decomposition of~$G_p$.
	First, suppose that~$\max\{ d_G(x_1, a) : a \in J_1 \} \leq 2$ and~$N_G(x_n) \cap V(K_k) \neq \emptyset$. 
	In this case, we claim that~$J_1 \subseteq X$. Otherwise~$J_1 \subseteq Y$, and 
	since~$N_G(x_1) \subseteq N_G(x_L) = N_G(\ell_X(K_1))$, we have that~$d_G(x_1, J_1) \geq 3$, 
	a contradiction. Thus~$J_1 \subseteq X$. Let 
	\[
		S = \{ r_X(K_i), r_Y(K_i) : 2 \leq i \leq k - 1\}. 
	\]
	By~\ref{it:ji}, the set~$S$ dominates~$K_i \cup J_i$, for~$i = 2, \ldots, k - 1$. 
	Since~$d_G(x_1, x') \leq 2$, for each~$x' \in J_1$, \ref{it:independent} implies 
	that there exists~$y^* \in N_G(x_1) \cap N_G(x')$, for every~$x' \in J_1$. 
	Thus, the set~$\{ r_X(K_1), y^* \}$ dominates~$K_1 \cup J_1 \cup \{ x_1, \ldots, x_L \}$. 
	On the other hand, since~$N_G(x_n) \cap V(K_k) \neq \emptyset$, there 
	exists~$y' \in N_G(x_n) \cap V(K_k)$. By similar arguments as before, the set~$\{y', r_X(K_k)\}$ 
	dominates~$K_k \cup J_k \cup \{ x_R, \ldots, x_n \}$. Observe that, in case~$J_k \subseteq X$, 
	the vertex~$y'$ dominates~$J_k$ since~$G$ is biconvex. Therefore,~$S \cup \{ r_X(K_1), y^*, r_X(K_k, y') \}$ 
	dominates~$G$ implying that~$\gamma(G) \leq 2k$. 

	Finally, suppose that~$\max\{ d_G(x_1, a) : a \in J_1 \} \geq 3$ or~$N_G(x_n) \cap V(K_k) = \emptyset$. 
	First, let~$S$ be the same set defined above. Let~$y^*$ and~$y'$ be vertices adjacent to~$x_1$ and~$x_n$, 
	respectively. By Lemma~\ref{lem:lorna}~$c)$, the set~$\{ y^*, y'\}$ dominates~$\{x_1, \ldots, x_L, x_R, \ldots, x_n \}$. 
	Therefore, the set 
	\[
		S' = S \cup \{ r_X(K_1), r_Y(K_1), r_X(K_k), r_Y(K_k), y^*, y' \}
	\]
	dominates~$G$, which implies that~$\gamma(G) \leq 2k + 2$. 
\end{proof}

\begin{theorem}
	If~$G$ is a connected biconvex graph, then~$\gamma(G) \leq 2\rho(G)$.
\end{theorem}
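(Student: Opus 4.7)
Write $k := \dw(G)$. The plan is to combine the lower bound for $\rho(G)$ from Lemma~\ref{lem:bi-pack} with the upper bound for $\gamma(G)$ from Lemma~\ref{lem:bi-dom}, matched to the structural hypotheses appearing in those lemmas. Abbreviate $(\mathrm{D}_1)$: ``$\max\{d_G(x_1,a):a\in J_1\}\leq 2$''; $(\mathrm{D}_2)$: ``$N_G(x_n)\cap V(K_k)\neq\emptyset$''; $(\mathrm{P}_1)$: ``$\max\{d_G(x_1,a):a\in J_1\}\geq 3$''; and $(\mathrm{P}_2)$: ``$N_G(x_n)\subseteq J_k$''. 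Note that $(\mathrm{P}_1)$ is the negation of $(\mathrm{D}_1)$, that Lemma~\ref{lem:bi-pack} always gives $\rho(G)\geq k$, and that it gives $\rho(G)\geq k+1$ under either $(\mathrm{P}_1)$ or $(\mathrm{P}_2)$.

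For the base case $k=1$ I would argue directly, since Lemma~\ref{lem:bi-dom} requires $k\geq 2$: here $G_p$ is a complete bipartite graph $K_1$ together with a pendant independent set $J_1$; any edge of $K_1$ yields a two-vertex dominating set of $V(G_p)$, and Lemma~\ref{lem:lorna}(c) extends this to cover $X\setminus X_p$. Since $\rho(G)\geq 1$, this gives $\gamma(G)\leq 2\leq 2\rho(G)$. For $k\geq 2$ the three mutually exclusive subcases are (A) $(\mathrm{D}_1)\wedge(\mathrm{D}_2)$, (B) $\neg(\mathrm{D}_1)$, and (C) $(\mathrm{D}_1)\wedge\neg(\mathrm{D}_2)$. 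In (A), Lemma~\ref{lem:bi-dom} gives $\gamma(G)\leq 2k$ and Lemma~\ref{lem:bi-pack} gives $\rho(G)\geq k$, hence $\gamma(G)\leq 2\rho(G)$. In (B), $(\mathrm{P}_1)$ holds, so $\rho(G)\geq k+1$; together with $\gamma(G)\leq 2k+2$ from Lemma~\ref{lem:bi-dom}, again $\gamma(G)\leq 2\rho(G)$.

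The main obstacle is case (C), in which we have $\gamma(G)\leq 2k+2$ but still need $\rho(G)\geq k+1$. My plan is to establish $(\mathrm{P}_2)$, i.e.\ $N_G(x_n)\subseteq J_k$, from the hypothesis $N_G(x_n)\cap V(K_k)=\emptyset$, so that Lemma~\ref{lem:bi-pack} applies. By Lemma~\ref{lem:lorna}(c), $N_G(x_n)\subseteq N_G(x_R)$. Since $x_R\in V(K_k)$, properties~(i)--(iii) of the complete bipartite decomposition confine $N_G(x_R)\cap Y$ to $V(K_{k-1})\cup V(K_k)\cup(J_k\cap Y)$. Removing the $V(K_k)$ part by hypothesis, the delicate step is ruling out $N_G(x_n)\cap V(K_{k-1})\neq\emptyset$: property~(i) a priori allows edges between $V(K_{k-1})$ and $V(K_k)$, so this must be excluded using the strong ordering of Lemma~\ref{lem:lorna}(b) together with the maximality of $N_G(x_R)$ among the vertices of $N(y_m)$ that defines $x_R$. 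Extra care is needed in the corner situation $J_k\subseteq X$, where $(\mathrm{P}_2)$ is vacuous; in that case I would instead construct a packing of size $k+1$ directly by adjoining $x_n$ to the size-$k$ packing built in Lemma~\ref{lem:bi-pack} and verifying the distance-$3$ condition from $N_G(x_n)\cap V(K_k)=\emptyset$.
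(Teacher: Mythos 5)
Your overall strategy is the same as the paper's: combine Lemma~\ref{lem:bi-pack} with Lemma~\ref{lem:bi-dom} for $k\geq 2$ and treat $k=1$ separately. However, your base case is wrong as stated. For $k=1$ you claim $\gamma(G)\leq 2$, but this fails: take $X=\{x_1,x_2,x_3\}$, $Y=\{y_1,y_2,y_3\}$ with edges $x_1y_1$, $x_2y_1$, $x_2y_2$, $x_2y_3$, $x_3y_3$. This tree is biconvex with $x_L=x_R=x_2$, $G_p=K_{1,3}$, $\dw(G)=1$, yet $\gamma(G)=3$ (the pairwise disjoint sets $\{x_1,y_1\}$, $\{x_3,y_3\}$, $\{x_2,y_2\}$ must each be hit). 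The specific error is the claim that ``any edge of $K_1$ yields a two-vertex dominating set of $V(G_p)$, and Lemma~\ref{lem:lorna}(c) extends this to cover $X\setminus X_p$'': part~(c) only gives nested neighborhoods, so dominating the left tail needs a vertex of $N_G(x_1)$ and the right tail a vertex of $N_G(x_n)$, and no single edge of $K_1$ need supply both (in the example no edge of $K_1$ works). The correct handling, as in the paper, splits on $d_G(x_1,x_n)$: if it is at most $2$, a common neighbor $y^*$ dominates all of $X$ by convexity and $\{\ell_X(K_1),y^*\}$ suffices; if it is at least $3$, then $\{x_1,x_n\}$ is a packing, so $\rho(G)\geq 2$ and the weaker bound $\gamma(G)\leq 4$ suffices.

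For $k\geq 2$ your cases (A) and (B) are fine and match the paper's (implicit) argument. In case (C) you have correctly located the real subtlety — the hypotheses of the two lemmas do not dovetail, since Lemma~\ref{lem:bi-pack} needs $N_G(x_n)\subseteq J_k$ while case (C) only provides $N_G(x_n)\cap V(K_k)=\emptyset$ — but what you offer there is a plan, not a proof: the step ruling out $N_G(x_n)\cap V(K_{k-1})\neq\emptyset$ is announced (``must be excluded using the strong ordering \dots'') rather than carried out, and the corner case $J_k\subseteq X$ is likewise deferred to a packing construction whose distance verification again hinges on the unproved exclusion (a neighbor of $x_n$ in $V(K_{k-1})\cap Y$ would put $x_n$ at distance $2$ from $\ell_X(K_{k-1})$, which may lie in the packing $S$). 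The paper itself glosses over this point, simply asserting that the two lemmas imply the result, so identifying it is to your credit; but as submitted, both the $k=1$ case and case (C) leave genuine gaps.
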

\begin{proof}
	Let~$k = \dw(G)$. Lemma~\ref{lem:bi-pack} and Lemma~\ref{lem:bi-dom} imply the 
	result when~$k \geq 2$. So suppose that~$k = 1$, that is~$G_p = K_1 \cup J_1$. 
	We distinguish two cases.

	\vspace{2mm}
	\noindent \textbf{Case 1:} $d_G(x_1, x_n) = 2$.

	In this case, there exists a vertex~$y^* \in N_G(x_1) \cap N_G(x_n)$. Since 
	$G$ is biconvex, the vertex~$y^*$ dominates~$X$. Moreover, as~$k = 1$, we 
	have that~$N_G( \ell_X(K_1) ) = Y$. Thus, the set~$\{ \ell_X(K_1), y^* \}$ 
	dominates~$G$. Therefore~$\gamma(G) \leq 2 \leq 2\rho(G)$.

	\vspace{2mm}
	\noindent \textbf{Case 2:}~$d_G(x_1, x_n) \geq 3$.

	In this case, $\{ x_1, x_n \}$ is a packing of~$G$,  which implies that~$\rho(G) \geq 2$. 
	Let~$y^* \in N_G(x_1)$ and~$y' \in N_G(x_n)$. By Lemma~\ref{lem:lorna}~$c)$, 
	we have that the vertex~$y^*$ dominates~$\{ x_1, \ldots, x_L \}$ and~$y'$ 
	dominates~$\{ x_R, \ldots, x_n \}$. Furthermore, the set~$\{ r_X(K_1), r_Y(K_1) \}$ 
	dominates~$K_1 \cup J_1$. Therefore, $\gamma(G) \leq 4 \leq 2\rho(G)$. 
\end{proof}

We conclude this section showing that the previous upper bound is tight. Let~$k$ be a positive 
integer. We define the graph~$G'_k$ as follows. Let~$G_i$ be a copy of the complete bipartite 
graph~$K_{2,2}$ such that~$V(G_i) = \{ x_{2i - 1}, x_{2_i} \} \cup \{ y_{2i - 1}, y_{2_i} \}$, 
for~$i = 1, \ldots, k$. Then  
\[
	\begin{array}{rcl}
		V(G'_k) & = & \displaystyle \bigcup_{i=1}^{k}{V(G_i)}, \\[2mm]
		E(G'_k) & = & \displaystyle \bigcup_{i=1}^{k}{E(G_i)} \cup \{ x_{2i}y_{2i + 1} : 1 \leq i \leq k - 1\}. 
	\end{array}
\]
An example of~$G'_k$ for~$k = 3$ is given in Figure~\ref{fig:tight}. Since we need at least two vertices to 
dominate each~$G_i$, we have that~$\gamma(G'_k) = 2k$. On the other hand, the set 
\[
	S = \{ x_{2i - 1} : 1 \leq i \leq k, i \text{ is odd} \} \cup  \{ y_{2i - 1} : 1 \leq i \leq k, i \text{ is even} \}
\]
is a packing of~$G'_k$. Moreover, any packing of~$G$ does not contain more than one vertex from each~$G_i$. 
Therefore~$\rho(G'_k) = k$ which implies that~$\gamma(G'_k) = 2\rho(G'_k)$. Finally, we observe 
that~$G'_k$ is a bipartite permutation graph. Therefore this upper bound is also tight if we restrict 
to this class.

\begin{figure}
	\centering
	\includegraphics[scale=.5]{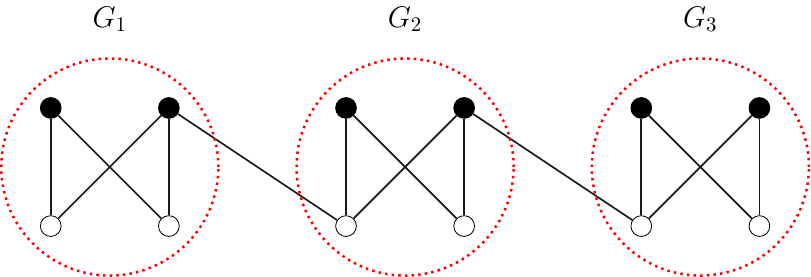}
	\caption{The graph~$G'_3$.}
	\label{fig:tight}
\end{figure}

\section{Final remarks}

In this work, we progress towards Conjecture 1 by showing 
that~$\gamma(G) < 2.45\rho(G)$ for every bicubic graph~$G$.
We believe that this result can be extended to 
bipartite and subcubic graphs by using analogous techniques. For this, 
a first step could be to extend the result obtained by 
Kostochka \& Stocker~\cite{KostochkaS09} to subcubic graphs. 

Henning et al.~\cite{HenningLR11} observed that~$\gamma(G) \leq \Delta \rho(G)$ 
for any graph~$G$ without isolated vertices. We propose a stronger version of 
Conjecture~1 as follows.
\setcounter{theorem}{1}
\begin{conjecture}\label{conj:Delta-1}
	For any connected graph~$G$ with at least two vertices, 
$\gamma(G) \leq (\Delta-1) \rho(G)+1$.
\end{conjecture}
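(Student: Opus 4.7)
The plan is to strengthen the classical one-line proof of the Henning et al.\ bound $\gamma(G)\leq \Delta(G)\rho(G)$ so as to save $\rho(G)-1$ vertices. The standard proof takes a maximum packing $P$ and observes that $D_{0}=\bigcup_{v\in P}N_{G}(v)$ is a dominating set: every vertex of $P$ is dominated because it has a neighbour in $D_{0}$, and every $u\notin P$ lies within distance $2$ of some $v\in P$, so either $u\in N(v)\subseteq D_{0}$ or $u$ has a neighbour in $N(v)\subseteq D_{0}$. This yields $|D_{0}|\leq \sum_{v\in P}\deg(v)\leq \Delta\rho$. The target bound $(\Delta-1)\rho+1$ asks us to drop exactly one neighbour $w_{v}\in N(v)$ for every $v\in P$ except one.

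The first step is to root the packing. Form the auxiliary graph $H$ on vertex set $P$ with $vv'\in E(H)$ whenever $d_{G}(v,v')=3$. Since $G$ is connected and $P$ is maximum (so every vertex of $G$ lies within distance $2$ of $P$), $H$ is connected; choose a spanning tree $T$ of $H$ rooted at an arbitrary $r\in P$. The root absorbs the additive $+1$: put all of $N(r)$ into the tentative dominating set, contributing $\deg(r)\leq \Delta=(\Delta-1)+1$ vertices. Then process the remaining packing vertices in a BFS traversal of $T$.

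The inductive step is the heart of the argument. For each non-root $v\in P$ with parent $v'$ in $T$, any distance-$3$ path $v'{-}u'{-}u{-}v$ in $G$ exhibits a vertex $u\in N(v)$ whose neighbour $u'$ already lies in the partial dominating set (it was included when $v'$ was processed). My plan is to drop $u$ from the contribution of $v$ and add only the $\Delta-1$ remaining vertices of $N(v)$. The inductive invariant I would maintain is that, after processing the first $i$ levels of $T$, the union $N[v]\cup N^{2}(v)$ is dominated for every packing vertex $v$ processed so far. Preserving it requires that every distance-$2$ vertex of $v$ whose only short route into $N[P]$ passed through $u$ is still reached either by one of the remaining chosen neighbours of $v$ or by vertices already selected from $v'$'s ball.

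The main obstacle is precisely the configuration responsible for the current gap in Conjecture~\ref{conj:henning-subcubic} for bicubic graphs: when each neighbour of $v$ has a \emph{private} distance-$2$ attachment (a vertex adjacent to $v$ only through that one neighbour), no vertex of $N(v)$ can be dropped locally without orphaning a private vertex. Any proof of Conjecture~\ref{conj:Delta-1} for $\Delta=3$ immediately settles Conjecture~\ref{conj:henning-subcubic}, so the hard case is unavoidable. Overcoming it will likely require a global exchange argument that simultaneously modifies $P$ (replacing it by another maximum packing better aligned with $T$), or the use of a spanning structure inside $G$ itself rather than inside the quotient $H$, so that the ``$+1$'' is produced once globally rather than paid locally at every $v$. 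This is the step I expect to resist a clean solution.
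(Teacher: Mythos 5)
The statement you are addressing is Conjecture~\ref{conj:Delta-1}; the paper offers no proof of it and explicitly states that the authors believe their techniques are ``still far from settling'' it. Your submission is likewise not a proof: it is an approach sketch that, by your own admission, stalls at the decisive step. The honest self-assessment is accurate --- your observation that the case $\Delta=3$ already contains Conjecture~\ref{conj:henning-subcubic} (the open Henning--L\"owenstein--Rautenbach conjecture for subcubic graphs) is correct, so no purely local ``drop one neighbour per packing vertex'' argument can be expected to close the gap, and you correctly identify the obstructing configuration: a packing vertex $v$ each of whose neighbours has a private vertex at distance $2$ from $v$ reachable only through that neighbour.

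Beyond the acknowledged gap, there is a concrete error in the part you do assert. The auxiliary graph $H$ on $P$ with edges joining packing vertices at distance exactly $3$ need not be connected, even for a maximum packing in a connected graph: in the path $P_5 = v_1v_2v_3v_4v_5$, the set $\{v_1,v_5\}$ is a maximum packing with $d(v_1,v_5)=4$, so $H$ has no edges. The correct general fact is only that consecutive packing vertices are at distance at most $5$ (every vertex lies within distance $2$ of a maximal packing), so your spanning tree $T$ would have to tolerate edges of length $4$ and $5$, and for those the ``distance-$3$ path $v'\!-\!u'\!-\!u\!-\!v$'' witnessing an already-dominated neighbour $u'$ of some $u\in N(v)$ does not exist. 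This breaks the inductive step even before the private-neighbour obstruction arises. In short: the statement remains a conjecture, and your proposal neither proves it nor matches any argument in the paper, since the paper contains none.
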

We believe our techniques are still far from settling Conjecture \ref{conj:Delta-1}. 
However, we can relax this problem by considering a decrease 
in the second parameter of the upper bound, that is, is it true 
that,~$\gamma(G) \leq \max\{(\Delta-1) \rho(G),\Delta (\rho(G)-1)\}+1$, 
for every connected graph~$G$ with at least two vertices?

\begin{figure}[htb]
	\centering
	\includegraphics[scale=.5]{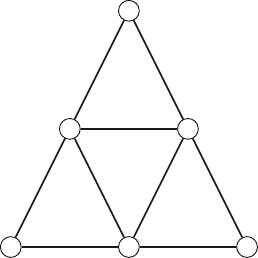}
	\caption{The sun graph satisfies that $\gamma(G) = 2\rho(G)$.}
	\label{fig:sun}	
\end{figure}

On the other hand, we also investigated some classes of graphs, and showed 
constant upper bounds for the ratio $\gamma(G)/\rho(G)$ in two cases. 
As we have seen in Section~\ref{sec:introduction}, L\"{o}wenstein~\cite{LowensteinRR13} 
showed that $\gamma(G)/\rho(G) \leq 2$ when $G$ is a cactus graph. 
We took a first step towards extending this result by studying maximal outerplanar 
graphs. We show that, in this class, the inequality $\gamma(G)/\rho(G) \leq 3$ holds.  
We believe that our technique to analyze maximal outerplanar graphs can be improved 
to obtain a better upper bound for this class of graphs.
\begin{conjecture}\label{conj:Outerplanar-2}
	For any maximal outerplanar graph~$G$, $\gamma(G) \leq 2\rho(G)$.
\end{conjecture}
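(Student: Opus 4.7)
The plan is to strengthen the two-step reduction of Section~\ref{sec:outerplanar} rather than replace it. Since $\rho(G) \geq \rho(G') = \gamma(G')$ is already tight enough (via Lemma~\ref{lemma:rhogeq-outerplanar} and Proposition~\ref{prop:gammarho-duallychordal}), it would suffice to sharpen Lemma~\ref{lemma:gammaleq-outerplanar} from $\gamma(G) \leq 3\gamma(G')$ to $\gamma(G) \leq 2\gamma(G')$. That is, starting from a minimum dominating set~$X'$ of~$G'$, the aim is to produce a dominating set of~$G$ using at most two vertices from each triangle in~$X'$.

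I would root the dual tree~$G^{*}$ at some node and process the triangles of~$X'$ top-down. For a triangle $abc \in X'$ sharing edge~$uv$ with its parent in~$G^{*}$, the natural choice is~$\{u,v\}$: these two vertices dominate the remaining vertex~$w$ of~$abc$, every vertex in any child triangle sharing an edge of~$abc$ incident to~$u$ or~$v$, and the entire parent triangle. The only vertices of~$G$ that~$\{u,v\}$ can fail to cover are those lying in triangles~$T'$ with $T' \cap \{a,b,c\} = \{w\}$, i.e., further along the fan of triangles around~$w$. Since the triangles containing a fixed vertex form a path in~$G^{*}$, each such fan has a very structured shape, and my plan is to distinguish two outcomes: either another triangle of the fan already lies in~$X'$, so its two chosen vertices cover what~$\{u,v\}$ missed; or the fan is long enough that an extra vertex from it can be added to a maximum packing of~$G$, giving slack in~$\rho(G)$ that pays for an occasional third vertex in the dominating set.

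The main obstacle is precisely the interaction between a dominating triangle and the fans through its three vertices simultaneously. A triangle $abc \in X'$ may have non-trivial fans through all three of~$a,b,c$, so no pair of vertices in~$abc$ covers its entire $G'$-neighborhood and neither of the two outcomes above applies directly. I would try to resolve this by a local exchange argument, replacing~$abc$ in~$X'$ by a fan neighbor (or rerouting the spanning tree~$G^{*}$ of~$G'$), combined with an induction that peels off an ear (a degree-$2$ vertex of~$G$, which always exists in a maximal outerplanar graph). The accounting is delicate because removing an ear need not decrease~$\rho(G)$, so the induction has to be coupled with a charging scheme that amortizes each extra dominating vertex against packing slack guaranteed by the same local fan structure; balancing this trade-off is where I expect most of the work to lie.
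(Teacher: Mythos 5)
First, a point of order: the statement you are proving is Conjecture~\ref{conj:Outerplanar-2}, which the paper leaves \emph{open}; the paper only establishes the weaker bound $\gamma(G) \leq 3\rho(G)$ (Theorem~\ref{theorem:maxouter}) and explicitly says its technique would need to be improved to reach the factor~$2$. So there is no proof in the paper to compare against, and your text is in any case a plan rather than a proof: the central difficulty (a triangle of $X'$ with non-trivial fans through all three of its vertices) is identified but not resolved, and you yourself defer the charging/amortization step that would have to carry the whole argument.

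More importantly, the first step of your plan --- sharpening Lemma~\ref{lemma:gammaleq-outerplanar} to $\gamma(G) \leq 2\gamma(G')$ --- is not merely hard, it is false. Take the outer cycle $a, x_1, x_2, b, y_1, y_2, c, z_1, z_2$ triangulated by the inner triangle $abc$ together with $ax_1x_2$, $ax_2b$, $by_1y_2$, $by_2c$, $cz_1z_2$, $cz_2a$. Every triangle meets $\{a,b,c\}$, so the node $abc$ dominates $G'$ and $\gamma(G') = \rho(G') = 1$; but $N[x_1]$, $N[y_1]$, $N[z_1]$ are pairwise disjoint at mutual distance~$3$, so $\rho(G) = \gamma(G) = 3 = 3\gamma(G')$. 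Thus the factor~$3$ in Lemma~\ref{lemma:gammaleq-outerplanar} is tight, and no choice of two vertices per triangle of $X'$, nor any local exchange of $X'$ within $G'$, can succeed in general. The example also shows that the slack you hope to exploit lives in the gap $\rho(G) > \rho(G')$ (here $3$ versus $1$), i.e.\ in Lemma~\ref{lemma:rhogeq-outerplanar} rather than in the domination lemma; so the two-lemma separation you want to preserve must be abandoned, and the domination cost and the packing gain have to be charged against each other simultaneously across the fan structure. That coupled argument is precisely what is missing, and it is the entire content of the conjecture.
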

If Conjecture~\ref{conj:Outerplanar-2} is true, then this upper bound is tight. 
Since the graph in Figure~\ref{fig:sun}, known as the sun graph, meets this bound. 
Further subclasses of planar graphs can be studied,
as planar triangulations and triangulated disks.
Finally, we considered the class of biconvex graphs and obtained a tight upper bound 
of $2$ for the ratio $\gamma(G)/\rho(G)$. We observe that for many classes of graphs 
it is unkwown whether they admit a constant upper bound or not. We depict this information 
in Figure~\ref{fig:constant-ub} for some classes of graphs. 

\begin{figure}
	\centering
	\includegraphics[scale=.3]{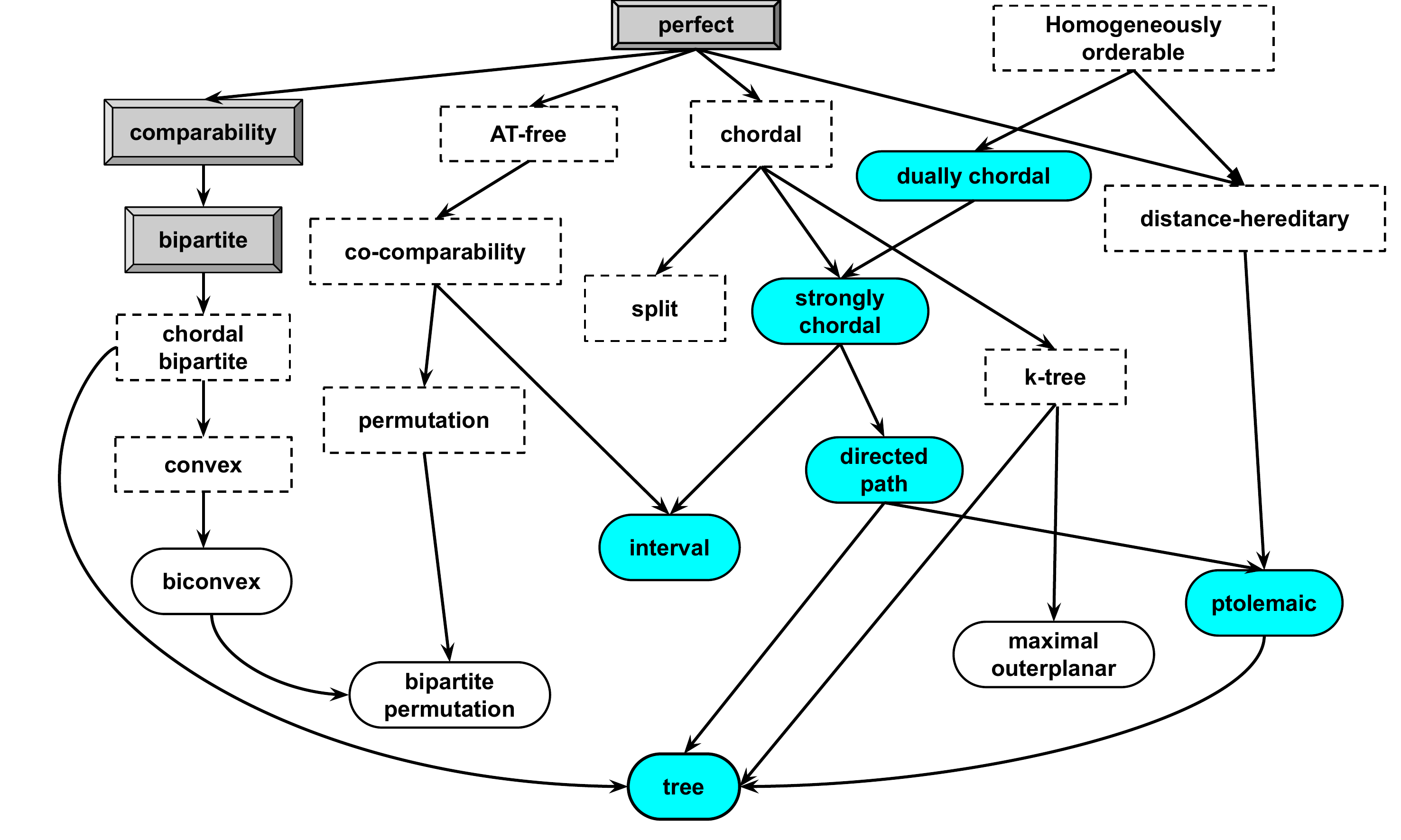}
	\caption{Constant upper bounds for the ratio $\gamma(G) / \rho(G)$ on some classes of graphs. Shaded ellipses 
	represent classes where $\gamma(G) \leq \rho(G)$. Clear ellipses indicate the results of our paper. That is, $\gamma(G) \leq 2\rho(G)$
	or $\gamma(G) \leq 3\rho(G)$. Moreover, 
	a shaded rectangle represents a class where the ratio $\gamma(G)/\rho(G)$ is unbounded, and a clear rectangle with 
	dashed border indicates that this relation is not known.}
	\label{fig:constant-ub}	
\end{figure}

\bibliographystyle{amsplain}
\bibliography{bibliography}



\end{document}